\begin{document}

\newcommand{\nat}{\omega}
\newcommand{\Tr}{{\rm Tr}}
\newcommand{\WF}{{\rm WF}}
\newcommand{\ie}{{\rm i.e.,} }
\newcommand{\cf}{{\rm cf.~}}
\newcommand{\n}{\ensuremath{n \in \nat}}
\newcommand{\iin}{\ensuremath{i \in \nat}}
\newcommand{\s}{\ensuremath{s \in \nat}}
\newcommand{\ep}{\ensuremath{\varepsilon}}
\newcommand{\eq}{\ensuremath{\Longleftrightarrow}}
\newcommand{\del}{\ensuremath{\Delta^1_1}}
\newcommand{\pii}{\ensuremath{\Pi^1_1}}
\newcommand{\G}{\ensuremath{\Gamma}}
\newcommand\tboldsymbol[1]{%
\protect\raisebox{0pt}[0pt][0pt]{%
$\underset{\widetilde{}}{\boldsymbol{#1}}$}\mbox{\hskip 1pt}}
\newcommand{\boldpii}{\ensuremath{\tboldsymbol{\Pi}^1_1}}
\newcommand{\om}{\ensuremath{\omega}}
\newcommand{\R}{\ensuremath{\mathbb R}}
\newcommand{\ds}{\ensuremath{\displaystyle}}
\renewcommand{\qedsymbol}{$\dashv$}
\newcommand{\ca}[1]{\ensuremath{\mathcal{#1}}}
\newcommand{\set}[2]{\ensuremath{\{#1 \hspace{0.3mm} \mid \hspace{0.3mm} #2\}}}
\newcommand{\tu}[1]{\textup{#1}}
\newcommand{\bg}{\tboldsymbol{\G}}
\newcommand{\pntcl}{\bg}
\newcommand{\norm}[1]{\ensuremath{\|#1\|}}
\newcommand{\Mem}{\textrm{Mem}}
\newcommand{\Empt}{\textrm{Empt}}
\newcommand{\Ball}{\textrm{UB}}
\newcommand{\Det}[1]{\ensuremath{\textrm{Det}}(#1)}
\newcommand{\fixp}[1]{\ensuremath{\textrm{Fix}_{#1}}}
\newcommand{\ZF}{\textbf{ZF}}
\newcommand{\DC}{\textbf{DC}}
\newcommand{\ZFDC}{\textbf{ZFDC}}
\newcommand{\ZFC}{\textbf{ZFC}}
\newcommand{\AC}{\textbf{AC}}
\newcommand{\REFL}{{\rm REFL}}
\newcommand{\BC}{{\rm BC}}
\newcommand{\WCC}{{\rm WCC}}
\newcommand{\pr}{\textrm{pr}}

\newtheorem{theorem}{Theorem}
\newtheorem{lemma}[theorem]{Lemma}
\newtheorem{definition}[theorem]{Definition}
\newtheorem{proposition}[theorem]{Proposition}
\newtheorem{corollary}[theorem]{Corollary}
\newtheorem{question}[theorem]{Question}
\newtheorem{remark}[theorem]{Remark}
\newtheorem{remarks}[theorem]{Remarks}
\newtheorem{example}[theorem]{Example}
\newtheorem{examples}[theorem]{Examples}

\title{Choice free Fixed Point Property in separable Banach spaces}

\author[V. Gregoriades]{Vassilios Gregoriades}
\address{Technische Universit\"{a}t Darmstadt,
Fachbereich Mathematik,
Arbeitsgruppe Logik,
Schlo{\ss}gartenstra{\ss}e 7,
64289 Darmstadt
Germany}
\email{gregoriades [at] mathematik [dot] tu-darmstadt [dot] de}

\date{}

\keywords{minimal invariant sets, non-expansive mappings, fixed point property, Axiom of Choice, effective descriptive set theory.}

\thanks{This a pre-publication draft of the work published in the \textbf{Proceedings of the American Mathematical Society 143 (2015), no. 5, 2143-2157}.\newline
The author would like to thank {\sc U. Kohlenbach}, {\sc A. Kreuzer}, {\sc H. Mildenberger} and {\sc C. Poulios} for helpful discussions, remarks and suggestions. The author would also like to give special thanks to the referee of this article, for their valuable suggestions and for providing some very interesting material which can be found at the last pages of the article.}

\subjclass[2010]{47H10, 03E15, 54H05, 54H25}

\maketitle

\begin{abstract}
We show that the standard approach of minimal invariant sets, which applies Zorn's Lemma and is used to prove fixed point theorems for non-expansive mappings in Banach spaces can be applied without any reference to the full Axiom of Choice when the given Banach space is separable. Our method applies results from classical and effective descriptive set theory.
\end{abstract}

\subsection*{Introduction.} A mapping $T:  X \to Y$ between Banach spaces is \emph{non-expansive} if $\norm{Tx - Ty}_Y \leq \norm{x - y}_X$, for all $x, y \in X$. A Banach space $X$ has the \emph{fixed point property} if for all non-empty convex and weakly compact $F \subseteq X$ and for all non-expansive mappings $T: F \to F$ there exists an $x \in F$ with $T(x) = x$, \ie \emph{$x$ is a fixed point of $T$}. A set $A$ is \emph{T-invariant} if $T[A] \subseteq A$ .

It is well known that Banach spaces which are uniformly convex or have a ``normal" structure have the fixed point property. In fact in these cases one can give a constructive proof of the existence of a fixed point, which in particular does not use Zorn's Lemma, see for example \cite{goehde_zum_prinzip_der_kontraktiven_abbildung} and \cite{kirk_an_abstract_fixed_point_theorem_for_nonexpansive_mappings}.

Nevertheless the standard technique for proving that a Banach space $X$ has the fixed point property is to show -given $F$ and $T$ as above- that there exists a non-empty, $T$-invariant, convex, weakly compact $L \subseteq F$ which is minimal with respect to these properties, \cf \cite{goebel_kirk_some_problems_in_metric_fixed_point_theory}. Then one shows that $L$ has zero diameter and so $L$ must be a singleton, say $\{x\}$. Since $L$ is $T$-invariant we have that $x$ is a fixed point of $T$.

The typical way of verifying the existence of a minimal set $L$ as above is by applying Zorn's Lemma. Our main aim is to show that, in the setting of separable Banach spaces, it is possible to obtain such a minimal set without the Axiom of Choice (\AC), \cf Theorem \ref{theorem minimal weakly compact convex}. We will do this by applying results from classical and effective descriptive set theory. At the end of this article we present a Banach space theoretic approach for proving Theorem \ref{theorem minimal weakly compact convex}, which does not use effective theory.

Let us see first exactly where the Axiom of Choice is used in order to prove the existence of a minimal set as above. This is an instantiation of the derivation of Zorn's Lemma from \AC.

A partially ordered space $(\mathbb{P},\leq)$ is \emph{inductive} if every linearly ordered subset of $\mathbb{P}$ has a least upper bound and a mapping $f: \mathbb{P} \to \mathbb{P}$ is \emph{expansive} if $x \leq f(x)$ for all $x \in \mathbb{P}$.

\begin{theorem}[Zermelo's Fixed Point Theorem]

\label{theorem Zermelo Fixed Point}

For every inductive space $(\mathbb{P},\leq)$ and for every expansive function $f: \mathbb{P} \to \mathbb{P}$ there exists some $x^* \in \mathbb{P}$ with $$f(x^*) = x^*.$$
\end{theorem}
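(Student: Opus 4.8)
The plan is to prove Zermelo's Fixed Point Theorem without the Axiom of Choice, following the classical Bourbaki--Witt argument. Fix an inductive space $(\mathbb{P},\leq)$ and an expansive $f:\mathbb{P}\to\mathbb{P}$. Since $\mathbb{P}$ is inductive, the empty chain has a least upper bound, so $\mathbb{P}$ has a least element $a$. I would call a subset $C\subseteq\mathbb{P}$ \emph{admissible} (an ``$f$-tower'') if $a\in C$, $C$ is closed under $f$ (\ie $x\in C$ implies $f(x)\in C$), and $C$ is closed under least upper bounds of its own linearly ordered subsets. The collection of admissible sets is nonempty (it contains $\mathbb{P}$) and closed under arbitrary intersections, so the intersection $M$ of all admissible sets is the \emph{smallest} admissible set. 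The goal is to show $M$ is itself linearly ordered; granting that, $M$ has a least upper bound $x^*\in M$ (by admissibility), and then $f(x^*)\in M$ with $x^*\leq f(x^*)$ forces $f(x^*)\leq x^*$, hence $f(x^*)=x^*$.

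The heart of the argument, and the step I expect to be the main obstacle, is proving that $M$ is linearly ordered --- this is exactly where one must avoid any choice-like move and argue purely by minimality/induction. Following Witt, I would call $c\in M$ an \emph{extreme point} if for every $x\in M$ with $x<c$ one has $f(x)\leq c$. The key lemma is: for every extreme point $c$ and every $x\in M$, either $x\leq c$ or $f(c)\leq x$. This is proved by showing that the set $M_c=\{x\in M : x\leq c \text{ or } f(c)\leq x\}$ is admissible (using that $c$ is extreme, expansiveness of $f$, and closure under least upper bounds of chains), whence $M_c=M$ by minimality of $M$. A second application of the same technique then shows that \emph{every} element of $M$ is an extreme point: the set of extreme points is itself admissible, again by minimality equal to $M$. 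Combining these two facts, for any $c,x\in M$ we have ($c$ extreme, so) $x\leq c$ or $f(c)\leq x$, and in the latter case $c\leq f(c)\leq x$; so any two elements of $M$ are comparable, \ie $M$ is linearly ordered.

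Both admissibility verifications are routine once the definitions are set up correctly, but they require some care in the chain-closure case: if $D\subseteq M_c$ is linearly ordered with least upper bound $b$, one splits into the case where $x\leq c$ for all $x\in D$ (then $b\leq c$ since $c$ is an upper bound of $D$) and the case where $f(c)\leq x$ for some $x\in D$ (then $f(c)\leq x\leq b$); the analogous split handles the set of extreme points. I would also note explicitly at the outset that every step --- forming the intersection $M$, the two inductions --- uses only $\ZF$, with no appeal to $\AC$ or even to dependent choice, which is the whole point of invoking this theorem here rather than Zorn's Lemma directly.
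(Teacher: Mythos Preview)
Your proposal is the standard Bourbaki--Witt argument and it is correct as outlined; the verifications that $M_c$ and the set of extreme points are admissible go through exactly as you sketch, and no choice is used anywhere.

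There is, however, nothing to compare against: the paper does not prove Zermelo's Fixed Point Theorem. It is quoted as a classical, named result and used as a black box (in the introduction and in the proof of Lemma~\ref{lemma inductive space maximal point}), with no proof supplied. So your write-up goes beyond what the paper does for this statement rather than paralleling or diverging from it.
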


Now consider a Banach space $X$, a non-empty convex weakly compact $F \subseteq X$ and a non-expansive mapping $T: F \to F$. We define the set
\[
\mathbb{P} = \set{K \subseteq F}{K \ \textrm{is non-empty, convex, weakly compact and $T$-invariant}}
\]
and we consider the relation $\leq$ of the inverse inclusion, \ie
\[
K \leq L \eq L \subseteq K
\]
for all $K, L \in \mathbb{P}$. Define also the \emph{strict part} $<$ of $\leq$ by
\[
K < L \eq K \leq L \ \& \ K \neq L
\]
for all $K, L \in \mathbb{P}$. It is clear that a minimal non-empty, convex, weakly compact and $T$-invariant $L \subseteq F$ is exactly a maximal point of $(\mathbb{P},\leq)$. It is not hard to verify that $(\mathbb{P},\leq)$ is inductive. Let us assume towards contradiction that $(\mathbb{P},\leq)$ does not have a maximal point. Using the Axiom of Choice we obtain a function $f: \mathbb{P} \to \mathbb{P}$ such that $K < f(K)$ for all $K \in \mathbb{P}$. Thus the function $f$ is expansive without a fixed point, contradicting Zermelo's Fixed Point Theorem.\footnote{There is a bit of trickery here since in order to derive Zorn's Lemma from the Axiom of Choice one applies Zermelo's Fixed Point Theorem to the space $(C(\mathbb{P}),\subseteq)$, where
\[
C(\mathbb{P}) = \set{\ca{S} \subseteq \mathbb{P}}{\ca{S} \ \textrm{is $\leq$-linearly ordered}}.
\]
The reason for this lies in the hypothesis of the statement of Zorn's Lemma: one starts with a space $(\mathbb{P},\leq)$, whose every linearly ordered subset has an upper bound but not necessarily a \emph{least} upper bound, \ie the space that we start with is not necessarily inductive. So we go a level up to the space $(C(\mathbb{P}),\subseteq)$, which is inductive, and we apply Zermelo's Fixed Point Theorem there: if the conclusion of Zorn's Lemma were not true, then there would be no maximal linearly ordered subset of $(\mathbb{P},\leq)$ and so there would be an expansive function $$\pi: (C(\mathbb{P}),\subseteq) \to (C(\mathbb{P}),\subseteq)$$ without a fixed point contradicting Zermelo's Fixed Point Theorem. In the case however where $(\mathbb{P},\leq)$ is inductive, as it is in our case, one can avoid the reference to $(C(\mathbb{P}),\subseteq)$ and apply Zermelo's Fixed Point Theorem directly to $(\mathbb{P},\leq)$ the way we have just described: if $(\mathbb{P},\leq)$ had no maximal point then there would be an expansive mapping $f: (\mathbb{P},\leq) \to (\mathbb{P},\leq)$ without a fixed point.}

We will show that one can actually obtain the preceding function $f$ without appealing to the Axiom of Choice. This will be an application of the \emph{uniformization property} (see below for the definition) of a certain pointclass of sets, namely the class \boldpii \ of coanalytic sets in Polish spaces, (this is Kondo's theorem that we state below). The challenge is to show that the preceding set $\mathbb{P}$ and the relation $<$ are in fact \boldpii \ subsets of some Polish spaces.\footnote{The fact that in our case the space $(\mathbb{P},\leq)$ is inductive is crucial for our purposes, for otherwise -in the light of the preceding footnote- we would have to show that $C(\mathbb{P})$ and $\subsetneq$ are \boldpii \ subsets of some Polish spaces. The latter however seems far from easy to achieve, if true at all, since the definition $(C(\mathbb{P}),\subseteq)$ is one level higher than that of $(\mathbb{P},\leq)$.}$^{,}$\footnote{Here it is worth noting that in  \cite{fuchssteiner_iterations_and_fixpoints} one can also find a method of eliminating the reference to \AC \ in favor of Zermelo's Fixed Point Theorem and from this one can derive applications to the fixed point property. Nevertheless the normal structure of the space is assumed, see p. 77.}

It is not hard to see that the preceding method extends to pointclasses other than \boldpii \ and to properties other than that of weak compactness. So we shall describe the general framework that we are working in, and derive our main result (Theorem \ref{theorem minimal weakly compact convex}) from Lemma \ref{lemma general minimal with respect to a good property}, which is stated in a more abstract context.

We point out that all our statements and proofs are given in the context of the \ZFDC \ theory, \ie the Zermelo-Fraenkel set theory (\ZF) with Dependent Choices (\DC). In particular the theorems that we invoke are provable in \ZFDC. It would nevertheless be interesting to check the validity of the following results in weaker theories. Kondo's theorem for example can be proved in the theory $\Pi^1_1$ -  $\textsf{CA}_0$, \cf \cite{simpson_subsystems_of_second_order_arithmetic}.

As mentioned above in our proofs we employ some tools from effective descriptive set theory. For a detailed exposition of the subject the reader can refer to Chapter 3 in \cite{yiannis_dst}.

Before proceeding we state a question in effective theory which has a classical, (\ie non-effective) application to the fixed point property. Suppose that \ca{X} is a separable Banach space with the fixed point property, $F$ is a non-empty weakly compact subset of \ca{X} and that $T : F \to\ F$ is a non-expansive mapping. It is not hard to verify that the (non-empty) set of fixed points of $T$,
\[
\fixp{T} = \set{x \in F}{T(x)=x}
\]
is a weakly closed subset of $F$ and therefore it is weakly compact. (Here we use the Hahn-Banach Theorem in separable Banach spaces, which is provable in $\textsf{WKL}_0$, a much weaker theory than \ZFDC, \cf \cite{simpson_subsystems_of_second_order_arithmetic}.) Assume moreover that  \ca{X} is recursively presented, $F$ is a \del \ set and that $T$ is \del-recursive. It would be interesting to see if \fixp{T} contains a \del \ member. Since \fixp{T} is easily a \del \ set the latter is reduced to the following.

\begin{question}
\label{question weakly compact contains hyp member}
Suppose that \ca{X} is a recursively presented Banach space and that $K$ is a non-empty weakly compact $\del(\alpha)$ subset of \ca{X} for some $\alpha \in \ca{N}$. Does $K$ contain a $\del(\alpha)$ point?
\end{question}

With the help of \del \ points one can derive the existence of Borel-measurable choice functions, \cf 4D.4 (the \del-uniformization criterion) and 4D.6 (the strong \del-selection principle) in \cite{yiannis_dst}. In particular if the preceding question has an affirmative answer then using the \del-uniformization criterion (or the strong \del-selection principle) one would be able to extract fixed points in a Borel-uniform way.\smallskip

\textbf{Is it true?} Suppose that \ca{Z} is a Polish space, \ca{X} is a separable Banach space which has the fixed point property and that $F$ is non-empty weakly compact subset of \ca{X}. If $T: \ca{Z} \times F \to F$ is a Borel-measurable function for which the function $$T_z: F \to F: T_z(x) = T(z,x)$$ is non-expansive for all $z \in \ca{Z}$, then there exists a Borel-measurable function $$f: \ca{Z} \to F$$ such that $f(z)$ is a fixed point of $T_z$ for all $z \in \ca{Z}$.\smallskip

We now proceed to the necessary definitions. We will often identify relations with sets and write $P(x)$ instead of $x \in P$. We also identify the first infinite ordinal number \om \ with the set of natural numbers.

\begin{definition}

\label{definition uniformization property and everything else}

\normalfont

Suppose that \ca{X} and \ca{Y} are Polish spaces and that $P$ is a subset of $\ca{X} \times \ca{Y}$. Define the set
\[
\exists^{\ca{Y}}P = \set{x \in \ca{X}}{(\exists y)P(x,y)}.
\]
A set $P^*$ \emph{uniformizes $P$} if $P^* \subseteq P$ and for all $x \in \exists^\ca{Y}P$ then there exists a unique $y \in \ca{Y}$ such that $P^*(x,y)$. In other words $P^*$ is the graph of a function $f$ such that $P(x,f(x))$ for all $x \in \exists^\ca{Y}P$.

By the term \emph{pointclass} we mean an arbitrary collection of sets in Polish spaces.

A pointclass \pntcl \ has the \emph{uniformization property} if for all Polish spaces \ca{X} and \ca{Y} and all sets $P \subseteq \ca{X} \times \ca{Y}$ in \pntcl \ there is a $P^*$ in \pntcl \ which uniformizes $P$. The pointclass \pntcl \ has the \emph{semi-uniformization property} if the preceding set $P^*$ is not necessarily a member of \pntcl, \ie if for all Polish spaces \ca{X} and \ca{Y} and all $P \subseteq \ca{X} \times \ca{Y}$ in \pntcl \ there is a $P^*$ which uniformizes $P$.

Suppose that $R(x_1,\dots,x_n)$ is an $n$-ary relation, $\ca{X}_1, \dots, \ca{X}_n$ are Polish spaces and that $P$ is a subset of $\ca{X}_1 \times \dots \ca{X}_n$. We say that the pointclass \pntcl \ \emph{computes $R$ on $P$} if there exists a set $R_{\Small{\pntcl}}$ in \pntcl \ such that
\[
R(x_1,\dots,x_n) \eq R_{\Small{\pntcl}}(x_1,\dots,x_n)
\]
for all $(x_1,\dots,x_n) \in P$.

A pointclass \pntcl \ is closed under \emph{continuous substitution} if for all continuous functions $f: \ca{X} \to \ca{Y}$ between Polish spaces and all sets $P\subseteq \ca{Y}$ in \pntcl \ the set $f^{-1}[P]$ is in \pntcl \ as well. We say that \pntcl \ is closed under \emph{logical conjunction} if for all sets $P, Q \subseteq \ca{X}$ in \pntcl \ the set $R \subseteq \ca{X}$ defined by
\[
R(x) \iff P(x) \ \& \ Q(x)
\]
is in \pntcl \ as well. In other words closure under logical conjunction means closure under finite intersections of subsets of the same space. Similarly one defines closure under logical disjunction. A pointclass \pntcl \ is \emph{good} if it is closed under continuous substitution and under the logical conjunction $\&$ and disjunction $\vee$.
\end{definition}

\begin{theorem}[Kondo \cf \cite{kondo_uniformisation} and 4E.4 \cite{yiannis_dst}]
\label{theorem Kondo}
The pointclass \boldpii \ has the uniformization property.
\end{theorem}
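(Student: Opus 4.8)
The plan is to follow the classical route and derive uniformization from the \emph{scale property} of \boldpii, which in turn rests on the tree analysis of coanalytic sets; this is in essence Kondo's argument, reorganized (as in 4E of \cite{yiannis_dst}) around the notion of a scale, and it is carried out entirely within \ZFDC. Using that \boldpii \ is closed under Borel preimages and that every Polish space is a continuous bijective image of a closed subset of the Baire space \ca{N}, a routine reduction shows that it suffices to uniformize a set $P \subseteq \ca{N} \times \ca{N}$ in \boldpii. For such a $P$ I fix a tree representation: expressing $P$ as the complement of the projection of a closed subset of $\ca{N} \times \ca{N} \times \ca{N}$ yields a continuous assignment $(\alpha,\beta) \mapsto T(\alpha,\beta)$ of trees on \nat \ with $P(\alpha,\beta)$ if and only if $T(\alpha,\beta)$ is well-founded.

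The first main step is to equip $P$ with a \boldpii-\emph{scale}: a sequence of \boldpii-norms $\varphi_n$ on $P$ (i.e., the associated relations $\leq^*_{\varphi_n}$ and $<^*_{\varphi_n}$ are in \boldpii) such that whenever $\beta_i \to \beta$, $P(\alpha,\beta_i)$ for all $i$, and each sequence $(\varphi_n(\alpha,\beta_i))_i$ is eventually constant with value $\lambda_n$, then $P(\alpha,\beta)$ and $\varphi_n(\alpha,\beta) \leq \lambda_n$ for all $n$. The canonical such scale is read off from the tree representation: $\varphi_n(\alpha,\beta)$ encodes the ordinal rank of $T(\alpha,\beta)$ together with $\beta \restriction n$ and the ranks of the first $n$ nodes in a fixed enumeration of $\nat^{<\nat}$, arranged so that pointwise limits of branches with eventually stable norms fall back into $P$ without raising the norms. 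Establishing that this is a genuine \boldpii-scale — that the norm relations are coanalytic and that the semicontinuity property holds — is the first place real work is needed.

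Given the scale, I would define the uniformizing set by the leftmost-branch recipe. For $\alpha$ with $P_\alpha := \{\beta : P(\alpha,\beta)\}$ nonempty, set $\lambda_0 = \min\{\varphi_0(\alpha,\beta) : \beta \in P_\alpha\}$, then $m_0 = \min\{\beta(0) : \beta \in P_\alpha,\ \varphi_0(\alpha,\beta) = \lambda_0\}$, then $\lambda_1 = \min\{\varphi_1(\alpha,\beta) : \beta \in P_\alpha,\ \varphi_0(\alpha,\beta) = \lambda_0,\ \beta(0) = m_0\}$, and so on, and let $\beta^*_\alpha = (m_0, m_1, \dots)$. Choosing at each stage $i$ a witness $\beta_i \in P_\alpha$ meeting all constraints up to that stage, one has $\beta_i \to \beta^*_\alpha$ with eventually stable norms, so semicontinuity of the scale forces $\beta^*_\alpha \in P_\alpha$ with $\varphi_n(\alpha,\beta^*_\alpha) = \lambda_n$ and $\beta^*_\alpha(n) = m_n$ for all $n$. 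Hence $\beta^*_\alpha$ is the unique element of $P_\alpha$ that is least in the well-ordering comparing successively $\varphi_0$, $\beta(0)$, $\varphi_1$, $\beta(1)$, \dots, and I would declare $P^*(\alpha,\beta)$ to hold precisely when $\beta = \beta^*_\alpha$.

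The remaining task, which I expect to be the genuine obstacle, is to verify that $P^*$ lies in \boldpii. The delicacy is that \boldpii \ is closed under $\forall^{\ca{N}}$, under number quantifiers, and under countable conjunction, but not under $\exists^{\ca{N}}$, so $P^*$ must be written without any ineliminable existential function quantifier. The device is to express the comparison through the \boldpii \ relations $<^*_{\varphi_n}$ and $\leq^*_{\varphi_n}$ rather than through bare norm inequalities: writing $A_n$ for $\beta <^*_{\varphi_n} \beta'$ and $E_n$ for $\beta \leq^*_{\varphi_n} \beta' \wedge \beta' \leq^*_{\varphi_n} \beta$ (both in \boldpii), one checks that the ternary relation ``$\beta \in P_\alpha$, and $\beta$ precedes or equals $\beta'$ in the above well-ordering, vacuously so if $\beta' \notin P_\alpha$'' is exactly
\[
(\exists n)\Big[\bigwedge_{i<n}\big(E_i \wedge \beta(i) = \beta'(i)\big) \wedge \big(A_n \vee (E_n \wedge \beta(n) < \beta'(n))\big)\Big] \ \vee \ \big(P(\alpha,\beta) \wedge \beta = \beta'\big),
\]
which is in \boldpii; the crucial point is that $A_0$ holds automatically when $\beta \in P_\alpha$ and $\beta' \notin P_\alpha$, which absorbs the offending case $\beta' \notin P_\alpha$. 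Since $P^*(\alpha,\beta)$ is then equivalent to the conjunction of $P(\alpha,\beta)$ with the $\forall^{\ca{N}}$-quantification over $\beta'$ of the above relation, it is in \boldpii. Finally $P^* \subseteq P$ and each nonempty section of $P^*$ is the singleton $\{\beta^*_\alpha\}$, so $P^*$ uniformizes $P$, and the theorem follows.
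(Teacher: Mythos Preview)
The paper does not prove this theorem at all: it is stated with citations to Kond\^{o}'s original article and to 4E.4 of \cite{yiannis_dst} and then used as a black box. Your proposal is a faithful outline of the standard scale-based proof of Kondo's theorem (precisely the argument referenced in \cite{yiannis_dst}), so in that sense your approach agrees with what the paper invokes; there is simply no in-paper proof to compare it against.
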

We also mention that the von Neumann Selection Theorem (\cf \cite{von_neumann_on_rings_of_operators_reduction_theory} and 4E.9 \cite{yiannis_dst}) implies that the pointclass $\tboldsymbol{\Sigma}^1_2$ has the semi-uniformization property.
\begin{lemma}
\label{lemma inductive space maximal point}
Suppose that \pntcl \ is a good pointclass which has the semi-uniformization property, \ca{X} is a Polish space, $\mathbb{P} \subseteq \ca{X}$ is non-empty in \pntcl \ and that $\leq$ is a partial ordering on $\mathbb{P}$ such that its strict part $<$ is computed by \pntcl \ on $\mathbb{P} \times \mathbb{P}$. If the space $(\mathbb{P},\leq)$ is inductive then it has a maximal point.
\end{lemma}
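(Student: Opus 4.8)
The plan is to argue by contradiction, extracting from the failure of the conclusion an expansive fixed-point-free self-map of $\mathbb{P}$ and then invoking Zermelo's Fixed Point Theorem (Theorem~\ref{theorem Zermelo Fixed Point}). So suppose $(\mathbb{P},\leq)$ has no maximal point; equivalently, for every $x \in \mathbb{P}$ there is a $y \in \mathbb{P}$ with $x < y$.

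First I would convert the hypothesis on $<$ into an honest $\pntcl$ relation on $\ca{X}\times\ca{X}$. Pick a set $R_{\pntcl}$ in $\pntcl$ with $R_{\pntcl}(x,y) \eq x < y$ for all $(x,y) \in \mathbb{P}\times\mathbb{P}$, and put
\[
P(x,y) \eq x \in \mathbb{P} \ \& \ y \in \mathbb{P} \ \& \ R_{\pntcl}(x,y).
\]
The sets $\set{(x,y)}{x \in \mathbb{P}}$ and $\set{(x,y)}{y \in \mathbb{P}}$ lie in $\pntcl$ by closure under continuous substitution (pull $\mathbb{P}\in\pntcl$ back along the two coordinate projections $\ca{X}\times\ca{X}\to\ca{X}$), so by closure of $\pntcl$ under logical conjunction the set $P$ is in $\pntcl$. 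By construction $P \subseteq \mathbb{P}\times\mathbb{P}$, and for $(x,y)\in\mathbb{P}\times\mathbb{P}$ we have $P(x,y) \eq x < y$; hence
\[
\exists^{\ca{X}}P = \set{x \in \mathbb{P}}{(\exists y \in \mathbb{P})\ x < y},
\]
which by our assumption is all of $\mathbb{P}$.

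Next I apply the semi-uniformization property of $\pntcl$ to $P$: there is a $P^*$ uniformizing $P$, that is, $P^*$ is the graph of a function $f$ whose domain is $\exists^{\ca{X}}P = \mathbb{P}$ and which satisfies $P(x,f(x))$ for all $x\in\mathbb{P}$. Since $P\subseteq\mathbb{P}\times\mathbb{P}$ this yields $f:\mathbb{P}\to\mathbb{P}$, and $P(x,f(x))$ unwinds to $x<f(x)$ for every $x\in\mathbb{P}$. In particular $x\leq f(x)$, so $f$ is expansive, while $f(x)\neq x$, so $f$ has no fixed point. As $(\mathbb{P},\leq)$ is inductive, this contradicts Theorem~\ref{theorem Zermelo Fixed Point}, and the contradiction completes the proof.

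The only step that demands care is the first one: the hypothesis supplies merely a $\pntcl$ set agreeing with $<$ on the square $\mathbb{P}\times\mathbb{P}$, with no control off that square, so one genuinely must cut down to $\mathbb{P}\times\mathbb{P}$, and it is precisely there that goodness of $\pntcl$ (continuous substitution together with conjunction) and the hypothesis $\mathbb{P}\in\pntcl$ are used. Everything else is bookkeeping — recognizing $\exists^{\ca{X}}P$ as the set of non-maximal points of $\mathbb{P}$ and applying Zermelo's theorem; note that no choice beyond what is already absorbed into the semi-uniformization property enters, and that inductiveness of $(\mathbb{P},\leq)$ in fact already forces $\mathbb{P}$ to possess a least element, so the non-emptiness assumption is automatic.
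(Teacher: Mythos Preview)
Your proof is correct and follows the same route as the paper: define the relation $P(x,y)\iff x,y\in\mathbb{P}\ \&\ x<y$, verify it lies in $\pntcl$, uniformize it to obtain an expansive fixed-point-free $f:\mathbb{P}\to\mathbb{P}$, and contradict Zermelo's Fixed Point Theorem. You are simply more explicit than the paper about why $P\in\pntcl$ (spelling out the use of continuous substitution along the projections and closure under conjunction), and your closing observation that inductiveness already yields a least element---hence non-emptiness---is a correct side remark not mentioned in the paper.
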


\begin{proof}
Define $R \subseteq \ca{X} \times \ca{X}$ by
\[
R(x,y) \eq x, y \in \mathbb{P} \ \& \ x < y.
\]
It is not hard to verify that $R$ is in \pntcl \ and that $R \subseteq \mathbb{P} \times \mathbb{P}$. Assume towards contradiction that $(\mathbb{P},\leq)$ does not have a maximal point. Then for all $x \in \mathbb{P}$ there is $y$ such that $R(x,y)$. We uniformize $R$ by $R^*$ and we notice that $\exists^\ca{X}R^* = \exists^\ca{X}R = \mathbb{P}$. Thus $R^*$ is the graph a function $f: \mathbb{P} \to \mathbb{P}$. Since $R(x,f(x))$ we have that $x < f(x)$ for all $x \in \mathbb{P}$. It follows that $f$ is expansive with no fixed point, contradicting Zermelo's Fixed Point Theorem, since $(\mathbb{P},\leq)$ is inductive.
\end{proof}

\subsection*{The Effros-Borel Space.} Suppose that \ca{X} is a Polish space. We denote by $F(\ca{X})$ the family of all closed subsets of \ca{X}. For al open $U \subseteq \ca{X}$ we consider the sets of the form
\[
\ca{A}_U= \set{C \in F(\ca{X})}{C \cap U \neq \emptyset}.
\]
We denote by \ca{S} the $\sigma$-algebra generated from the family $\set{\ca{A}_U}{U \subseteq \ca{X}, \ \textrm{open}}$. The \emph{Effros-Borel space} is the measurable space $(F(\ca{X}),\ca{S})$. A well-known theorem states that there is a topology $\ca{T}$ on $F(\ca{X})$ such that: (a) the space $(F(\ca{X}),\ca{T})$ is a Polish space and (b) the $\ca{T}$-Borel subsets of $F(\ca{X})$ are exactly the members of \ca{S}, \cf \cite{kechris_classical_dst} Section 12.C.

It is easy to verify that the relations $\Mem \subseteq \ca{X} \times F(\ca{X})$ and $\Empt \subseteq F(\ca{X})$ defined by
\[
\Mem(x,F) \eq x \in F
\]
and
\[
\Empt(F) \eq F = \emptyset
\]
for $x \in \ca{X}$ and $F \in F(\ca{X})$, are Borel. To see this consider a countable basis $(U_n)_{\n}$ for the topology of \ca{X} and notice that
\begin{eqnarray*}
x \in F &\eq& (\forall n)[x \in U_n \ \longrightarrow \ F \cap U_n \neq \emptyset]\\
        &\eq& (\forall n)[x \not \in U_n \ \vee \ F \in \ca{A}_{U_n}].
\end{eqnarray*}
Therefore $$\Mem = \cap_{n \in \om}([\ca{X} \setminus U_n \times F(\ca{X})] \cup [\ca{X} \times \ca{A}_{U_n}]).$$ The set $[\ca{X} \setminus U_n \times F(\ca{X})] \cup [\ca{X} \times \ca{A}_{U_n}]$ is evidently a Borel subset of $\ca{X} \times F(\ca{X})$.

Moreover
\begin{eqnarray*}
F \neq \emptyset &\eq& (\exists n)[F \cap U_n \neq \emptyset]\\
                 &\eq& (\exists n)[F \in \ca{A}_{U_n}].
\end{eqnarray*}
So $\Empt = \cap_{n} (F(\ca{X}) \setminus \ca{A}_{U_n})$.

The Effros-Borel space admits a selection theorem.

\begin{theorem}[Kuratowski, Ryll-Nardzewski, c.f. \cite{kechris_classical_dst} and \cite{kuratowski_ryll-nardzewski}]

\label{theorem selection in F(X)}

For every Polish space \ca{X} there is a sequence of Borel-measurable functions $$d_n: F(\ca{X}) \to \ca{X}, \quad \n,$$ such that for all non-empty $F \in F(\ca{X})$ the sequence $(d_n(F))_{\n}$ is contained in $F$ and is dense in $F$, \ie for all open $U \subseteq \ca{X}$ with $U \cap F \neq \emptyset$ there is some \n \ such that $d_n(F) \in U \cap F$.

\end{theorem}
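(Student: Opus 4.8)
The plan is to first construct a single Borel \emph{selector} $s\colon F(\ca{X})\to\ca{X}$ with $s(F)\in F$ for every non-empty closed $F$, by a successive approximation, and then to upgrade this to a dense sequence by relativizing the construction to the members of a fixed countable basis of $\ca{X}$.

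Fix a complete metric $\rho\le 1$ compatible with the topology of $\ca{X}$ and a countable dense sequence $(x_i)_{i\in\nat}$ in $\ca{X}$. For the basic selector I would build Borel maps $f_k\colon F(\ca{X})\to\ca{X}$, $k\in\nat$, with values among the $x_i$'s, so that $\bar B(f_k(F),2^{-k})\cap F\ne\emptyset$ and $\rho(f_k(F),f_{k+1}(F))<2^{-k+1}$ for every non-empty $F$. Here $f_0(F)$ is the $x_i$ of least index with $F\cap B(x_i,1)\ne\emptyset$, and given $f_k(F)=x_j$ one lets $f_{k+1}(F)$ be the $x_i$ of least index with $\rho(x_i,x_j)<2^{-k+1}$ and $F\cap B(x_i,2^{-(k+1)})\ne\emptyset$; such indices exist by density of $(x_i)$ together with $\bar B(f_k(F),2^{-k})\cap F\ne\emptyset$. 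Each $f_k$ is Borel, because the sets $\set{F}{F\cap B(x_i,\delta)\ne\emptyset}=\ca{A}_{B(x_i,\delta)}$ lie in the Effros--Borel $\sigma$-algebra and a countably valued map defined by a ``least index satisfying Borel conditions'' rule is Borel (the conditions $\rho(x_i,f_k(F))<\delta$ are also Borel in $F$ since $f_k$ is Borel with countably many values). Then $s(F):=\lim_k f_k(F)$ exists by completeness, lies in $F$ (each $\bar B(f_k(F),2^{-k})$ meets the closed set $F$ and $\rho(s(F),f_k(F))\to 0$), and is Borel as a pointwise limit of Borel functions into a Polish space.

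For the dense sequence it suffices, given a countable basis $(V_n)_{n\in\nat}$ of $\ca{X}$, to produce for each $n$ a Borel map $d_n\colon F(\ca{X})\to\ca{X}$ with $d_n(F)\in F$ whenever $F\ne\emptyset$, and $d_n(F)\in F\cap V_n$ whenever $F\cap V_n\ne\emptyset$: indeed, if $U$ is open and meets a non-empty $F$, choosing $n$ with $V_n\subseteq U$ and $V_n\cap F\ne\emptyset$ gives $d_n(F)\in F\cap U$. I obtain $d_n$ by running the above approximation ``aimed at $V_n$'': on the Borel set $\set{F}{F\cap V_n\ne\emptyset}$ start at the least stage $k_0$ (a Borel function of $F$) from which one can choose some $x_i$ with $\bar B(x_i,2^{-k_0+3})\subseteq V_n$ and $F\cap B(x_i,2^{-k_0})\ne\emptyset$, and thereafter always keep $\bar B(f_k(F),2^{-k+3})\subseteq V_n$ in addition to having the ball $\bar B(f_k(F),2^{-k})$ meet $F$; openness of $V_n$ and the existence of a witness $z\in F\cap V_n$ guarantee these choices exist for $k$ large. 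Since $\rho(d_n(F),f_{k_0}(F))<2^{-k_0+2}<2^{-k_0+3}$, the limit lies both in $F$ and in $V_n$. On the remaining Borel pieces set $d_n(F)=s(F)$ when $\emptyset\ne F$ and $F\cap V_n=\emptyset$, and $d_n(F)=x_0$ when $F=\emptyset$; the resulting $d_n$ is Borel.

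I expect the main obstacle to be the bookkeeping in this last step: keeping all the selection maps Borel while their choices genuinely depend on $F$, and arranging the geometry so that the limit point is trapped inside the prescribed open set $V_n$ rather than merely in its closure, which is what forces the nested balls to shrink strictly inside $V_n$ and hence the $F$-dependent starting stage $k_0$. The single-selector construction and the fact that a pointwise limit of Borel maps with Polish range is Borel are routine.
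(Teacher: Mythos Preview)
The paper does not supply a proof of this theorem; it is quoted as a classical result with references to Kechris and to the original Kuratowski--Ryll-Nardzewski paper, and is used as a black box throughout. So there is nothing in the paper to compare your argument against.

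Judged on its own, your proposal is correct and is essentially the standard proof (compare Kechris, \emph{Classical Descriptive Set Theory}, Theorem~12.13). The single-selector construction is exactly right, including the verification that the recursive choices exist and that $s$ is Borel as a pointwise limit. For the dense sequence your ``aimed at $V_n$'' variant works, and your computation $\rho(d_n(F),f_{k_0}(F))\le\sum_{k\ge k_0}2^{-k+1}=2^{-k_0+2}<2^{-k_0+3}$ correctly traps the limit inside $V_n$. One small remark: the extra invariant $\bar B(f_k(F),2^{-k+3})\subseteq V_n$ that you impose at every later step is in fact automatic once it holds at $k_0$, because $2^{-k+1}+2^{-(k+1)+3}\le 2^{-k+3}$; so you need not argue separately that it can be maintained. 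A slightly shorter alternative, which you may want to mention, is to apply the single selector $s$ to $\overline{F\cap V_n}$ (the map $F\mapsto\overline{F\cap V_n}$ is Effros--Borel) and then use regularity of $\ca{X}$: given open $U$ meeting $F$, choose $V_n$ with $\overline{V_n}\subseteq U$ and $V_n\cap F\ne\emptyset$, so that $d_n(F)\in\overline{F\cap V_n}\subseteq F\cap\overline{V_n}\subseteq F\cap U$. Either route is fine.
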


\begin{lemma}[\cf \cite{kechris_classical_dst} Exercise 12.11]

\label{lemma less-or-equal is in pii}

For every Polish space \ca{X} the relation $\leq \subseteq F(\ca{X}) \times F(\ca{X})$ defined by
\[
K \leq L \eq L \subseteq K
\]
where $K, L \in F(\ca{X})$ and its strict part $<$ are \boldpii.
\end{lemma}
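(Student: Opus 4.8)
The plan is to use the Kuratowski--Ryll-Nardzewski selection theorem to trade the quantifier over points of $L$ implicit in the inclusion $L \subseteq K$ for a quantifier over $\omega$, and then to fall back on the fact, established above, that $\Mem$ and $\Empt$ are Borel. Fix Borel-measurable maps $d_n : F(\ca{X}) \to \ca{X}$ as in Theorem~\ref{theorem selection in F(X)}. For non-empty closed $L$ the sequence $(d_n(L))_{\n}$ is dense in $L$, and since $L$ is closed this forces $L = \overline{\set{d_n(L)}{n \in \omega}}$. Using moreover that $K$ is closed, we get for non-empty $L$ that $L \subseteq K$ holds precisely when $d_n(L) \in K$ for every $n$: the forward implication is immediate, and conversely $\set{d_n(L)}{n \in \omega} \subseteq K$ gives $L = \overline{\set{d_n(L)}{n \in \omega}} \subseteq \overline{K} = K$. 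Adjoining the empty set, we obtain for all $K, L \in F(\ca{X})$
\[
K \leq L \quad \eq \quad \Empt(L) \ \vee \ (\forall n)[\, d_n(L) \in K \,].
\]

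Next I would check that the right-hand side defines a Borel subset of $F(\ca{X}) \times F(\ca{X})$. For each fixed $n$ the map $(K, L) \mapsto (d_n(L), K)$ from $F(\ca{X}) \times F(\ca{X})$ into $\ca{X} \times F(\ca{X})$ is Borel-measurable, since its first coordinate is the composition of a projection with the Borel map $d_n$ and its second coordinate is continuous; hence $\set{(K,L)}{d_n(L) \in K}$, being the preimage of the Borel set $\Mem$ under this map, is Borel. A countable intersection of Borel sets is Borel, and $\set{(K,L)}{\Empt(L)}$ is Borel because $\Empt$ is; so $\leq$ is Borel, and in particular it belongs to \boldpii, as every Borel set does.

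For the strict part, observe that once $L \subseteq K$ is granted, the condition $K \neq L$ is equivalent to $K \not\subseteq L$, that is, to $\neg(L \leq K)$; hence
\[
K < L \quad \eq \quad K \leq L \ \& \ \neg(L \leq K),
\]
which exhibits $<$ as the intersection of the Borel relation $\leq$ with the complement of $\leq$ read with its arguments transposed. So $<$ is Borel as well, a fortiori \boldpii. There is no genuine obstacle in this argument; the two points that call for a little care are to treat the empty closed set separately -- the sequence $(d_n(L))_{\n}$ carries no information when $L = \emptyset$ -- and to use the closedness of $K$, rather than that of $L$, when passing from a dense subset of $L$ to $L$ itself. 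It is perhaps worth noting that the argument in fact yields that $\leq$ and $<$ are Borel; the weaker \boldpii \ bound recorded here is all that the applications need, where these relations are intersected with the genuinely coanalytic set $\mathbb{P}$.
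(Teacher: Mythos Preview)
Your argument is correct, and in fact it yields more than the paper's: you show that both $\leq$ and $<$ are Borel, whereas the paper only obtains \boldpii. The difference in route is this. The paper handles the inclusion $L \subseteq K$ by the na\"ive formula $(\forall x)[x \in L \to x \in K]$, a universal quantifier over \ca{X} applied to a Borel matrix, which gives only \boldpii. Having only \boldpii\ for $\leq$, the paper cannot simply negate it to treat $K \neq L$; instead it invokes the selectors $d_n$ on $K$ to write $K \not\subseteq L$ as $K \neq \emptyset \ \& \ (\exists n)[d_n(K) \notin L]$, a Borel condition, and conjoins this with $L \subseteq K$ to land in \boldpii. You, by contrast, apply the selectors to $L$ already at the level of $\leq$, replacing the quantifier over \ca{X} by one over \om\ and obtaining Borel outright; the strict part then falls out by Boolean operations. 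Your route is the cleaner one and makes the role of the selector theorem more transparent; the paper's only advantage is that its treatment of $\leq$ is marginally more immediate, at the cost of extra work for $<$.
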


\begin{proof}
It is clear that
\[
L \subseteq K \eq (\forall x)[x \in L \ \longrightarrow \ x \in K],
\]
so $\leq$ is in \boldpii. For its strict part we have that
\begin{eqnarray*}
K < L &\eq& L \subseteq K \ \& \ L \neq K\\
      &\eq& L \subseteq K \ \& \ K \cap \ca{X} \setminus L \neq \emptyset\\
      &\eq& L \subseteq K \ \& \ K \neq \emptyset \ \& \ (\exists n)[d_n(K) \not \in L].
\end{eqnarray*}
The last equivalence holds because the sequence $(d_n(K))_{\n}$ is dense in $K$ and the set $L$ is closed.

This shows that the strict relation $<$ is in \boldpii.
\end{proof}

\subsection*{The main result.} We state our main result and then we prove it in steps using intermediate lemmas which are interesting in their own right. Since we have already pointed out that we are working inside \ZFDC \ we refrain ourselves from starting every statement below with the phrase ``The following is provable in \ZFDC". 

\begin{theorem}
\label{theorem minimal weakly compact convex}
Suppose that \ca{X} is a separable Banach space and that $F$ is a non-empty, convex and weakly compact subset of \ca{X}. For every Borel-measurable function $T: F \to F$ there is a non-empty convex weakly compact $L \subseteq F$ such that $T[L] \subseteq L$ and moreover $L$ is minimal with respect to these properties.
\end{theorem}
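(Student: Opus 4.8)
The plan is to apply Lemma~\ref{lemma inductive space maximal point} with $\pntcl := \boldpii$, which is good and, by Kondo's theorem (Theorem~\ref{theorem Kondo}), has the uniformization property and hence the semi-uniformization property. First I would fix the right Polish space. Since \ca{X} is separable and $F$ is weakly compact, it is a standard fact that the weak topology on $F$ is compact and metrizable; let $Y$ denote the resulting compact metrizable, hence Polish, space. I would then work inside the Effros--Borel space $F(Y)$, realized as a Polish space; because $Y$ is compact, its members are exactly the weakly closed, equivalently weakly compact, subsets of $F$. Put
\[
\mathbb{P} = \set{C \in F(Y)}{C \ \text{is non-empty, convex and $T$-invariant}},
\]
ordered by the inverse inclusion $\leq$ with strict part $<$, as in the discussion preceding Theorem~\ref{theorem Zermelo Fixed Point}. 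A maximal point of $(\mathbb{P},\leq)$ is precisely a non-empty convex weakly compact $T$-invariant $L \subseteq F$ that is minimal with respect to these properties, so it suffices to produce one. It then remains to verify the hypotheses of Lemma~\ref{lemma inductive space maximal point}: that $\mathbb{P}$ is a non-empty \boldpii\ subset of $F(Y)$, that $<$ is computed by \boldpii\ on $\mathbb{P} \times \mathbb{P}$, and that $(\mathbb{P},\leq)$ is inductive.

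For the complexity of $\mathbb{P}$ I would treat the three defining clauses separately. Non-emptiness is Borel via the relation $\Empt$, and $\mathbb{P} \neq \emptyset$ since $F \in \mathbb{P}$. For convexity I would use that, $F$ being convex, the midpoint map $h \colon Y \times Y \to Y$, $h(x,y) = \tfrac{1}{2}(x+y)$, is well defined and weakly continuous; combining the Kuratowski--Ryll-Nardzewski selectors $d_n$ of Theorem~\ref{theorem selection in F(X)} with the density of $(d_n(C))_{\n}$ in $C$, the weak closedness of $C$, and the elementary fact that a closed midpoint-convex subset of a topological vector space is convex, one obtains that $C$ is convex if and only if $C = \emptyset$ or $h(d_m(C),d_n(C)) \in C$ for all $m$, $n$; as $\Mem$ and the $d_n$ are Borel, ``convex'' is a Borel condition. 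For $T$-invariance I would first note that, since $(F,\text{weak})$ and $(F,\norm{\cdot})$ are both Polish and the identity between them is a continuous bijection, the weak-Borel and norm-Borel measurabilities of $T$ coincide, so $T \colon Y \to Y$ is Borel; hence $S(x,C) \iff x \in C \ \& \ T(x) \notin C$ is a Borel relation on $Y \times F(Y)$, and $T[C] \subseteq C \iff \neg(\exists x)\,S(x,C)$, which places ``$T$-invariant'' in \boldpii. Since \boldpii\ contains the Borel sets and is closed under finite intersections, $\mathbb{P}$ is \boldpii. Finally, $<$ is \boldpii\ already on all of $F(Y) \times F(Y)$ by Lemma~\ref{lemma less-or-equal is in pii} applied to $Y$, so a fortiori \boldpii\ computes $<$ on $\mathbb{P} \times \mathbb{P}$.

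It remains to check that $(\mathbb{P},\leq)$ is inductive. The empty chain has $F$ itself as least upper bound, since $F$ is the $\leq$-least element of $\mathbb{P}$. Given a non-empty $\leq$-chain $\ca{C} \subseteq \mathbb{P}$, that is, a subfamily linearly ordered by inclusion, set $L_0 = \bigcap \ca{C}$. Every finite subfamily of $\ca{C}$ has a $\subseteq$-least, and hence non-empty, member, so $\ca{C}$ has the finite intersection property; as its members are weakly closed subsets of the weakly compact $F$, weak compactness forces $L_0 \neq \emptyset$. Intersections preserve convexity, weak closedness (hence weak compactness in $F$) and $T$-invariance, so $L_0 \in \mathbb{P}$, and $L_0$ is evidently the $\leq$-least upper bound of $\ca{C}$. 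Thus $(\mathbb{P},\leq)$ is inductive, Lemma~\ref{lemma inductive space maximal point} supplies a maximal point, and that point is the required minimal $L$.

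I expect the main obstacle to be the identification of the correct ambient Polish space together with the verification that $\mathbb{P}$ is \boldpii. One must resist coding weakly compact convex sets inside a hyperspace of \ca{X} itself, the weak topology on \ca{X} being non-metrizable, and instead pass to the Effros--Borel space of the compact metrizable space $(F,\text{weak})$. Moreover the $T$-invariance clause cannot be reduced to a condition on the dense sequence $(d_n(C))_{\n}$ in the way convexity can, precisely because $T$ is merely Borel and the density argument requires continuity of $T$; this is why one spends a quantifier there and lands in \boldpii\ rather than in the Borel sets. Once the right space is in place, the remaining points --- goodness and uniformization for \boldpii, and inductiveness from weak compactness --- are routine.
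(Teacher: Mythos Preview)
Your proof is correct and takes a genuinely different route from the paper's. The paper works in the Effros--Borel space $F(\ca{X})$ of the \emph{norm} topology and must therefore show that ``$K$ is weakly compact'' is a \boldpii\ condition on $F(\ca{X})$; this is the content of Lemma~\ref{lemma weakly compact is in pii}, whose proof is the technical heart of the paper and requires effective descriptive set theory (Debs' theorem and Kleene's theorem on restricted quantification), or alternatively the tree-coding argument presented at the end. You sidestep this entirely by passing to $Y = (F,\text{weak})$, which is compact metrizable because \ca{X} is separable; in $F(Y)$ every closed set is automatically weakly compact, so the difficult clause disappears and only the Borel conditions ``non-empty'', ``convex'' and the \boldpii\ condition ``$T$-invariant'' remain. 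Your observation that the norm-Borel and weak-Borel structures on $F$ coincide (via Lusin--Souslin) is exactly what is needed to transfer the Borel-measurability hypothesis on $T$ into the new ambient space. The upshot is that your argument is more elementary and self-contained, while the paper's route yields Lemma~\ref{lemma weakly compact is in pii} as a result of independent interest (cf.\ Remark~\ref{remark bossard} on the connection to Bossard's theorem on \REFL).
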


Besides the (semi-)uniformization property of $\boldpii$ the heart of the proof lies also in the following result.

\begin{lemma}
\label{lemma weakly compact is in pii}
Suppose that \ca{X} is a separable Banach space. The set $\ca{R} \subseteq F(\ca{X})$ defined by
\[
K \in \ca{R} \eq K \ \textup{is weakly compact},
\]
is \boldpii.
\end{lemma}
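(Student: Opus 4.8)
The plan is to present $\ca{R}$ as the intersection of a Borel set with two sets each of the form $\{K : (\forall p \in \mathcal P)\,R(K,p)\}$ for a Polish space $\mathcal P$ and a Borel relation $R$, which places $\ca{R}$ in \boldpii. Fix once and for all a norming sequence $(f_m)_m$ in the closed unit ball $B^*$ of $\ca{X}^*$; such a sequence exists by the Hahn-Banach theorem and the separability of \ca{X}, and it is in particular total. Recall that, \ca{X} being separable, the space $(B^*,w^*)$ is compact metrizable, hence Polish, and that the evaluation $(\phi,x)\mapsto\phi(x)$ is \emph{jointly} continuous on $(B^*,w^*)\times\ca{X}$ -- this follows from $|\phi_k(x_k)-\phi(x)|\le\|x_k-x\|+|\phi_k(x)-\phi(x)|$ together with $\|\phi_k\|\le 1$. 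Let $(d_n)_n$ be the Borel selectors of Theorem~\ref{theorem selection in F(X)}. The one substantial Banach-space ingredient will be \emph{Grothendieck's double-limit criterion}: a norm-bounded $H\subseteq\ca{X}$ is relatively weakly compact if and only if, for all sequences $(x_n)_n$ in $H$ and all sequences $(\phi_m)_m$ in $B^*$, the iterated limits $\lim_n\lim_m\phi_m(x_n)$ and $\lim_m\lim_n\phi_m(x_n)$ agree whenever both exist.

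Using this together with the Eberlein-\v{S}mulian theorem, I would show that for $K\in F(\ca{X})$ weak compactness of $K$ is equivalent to the conjunction of three conditions: (a) $K$ is norm-bounded; (b) for all sequences $(x_n)_n$ in $K$ and $(\phi_m)_m$ in $B^*$, the two iterated limits of the array $\phi_m(x_n)$ agree whenever both exist; (c) whenever $(x_n)_n$ is a sequence in $K$ and $x\in\ca{X}$ satisfies $f_m(x_n)\to f_m(x)$ for every $m$, then $x\in K$. For the direction $\Rightarrow$: a weakly compact set is norm-bounded by the uniform boundedness principle; (b) is one half of Grothendieck's criterion; and for (c) one uses Eberlein-\v{S}mulian to extract a subsequence $x_{n_k}$ converging weakly to some $y\in K$, so that $f_m(y)=\lim_k f_m(x_{n_k})=f_m(x)$ for all $m$, whence $x=y\in K$ by totality of $(f_m)_m$. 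For the converse: (a) and (b) give, via Grothendieck, that the weak closure $\overline{K}^{w}$ is weakly compact; since \ca{X} is separable, the weak topology on the weakly compact set $\overline{K}^{w}$ is metrized by the metric induced by $(f_m)_m$ (it is a metric there, and a continuous bijection of a compact space onto a Hausdorff space is a homeomorphism), so every point of $\overline{K}^{w}$ is the weak limit of a sequence from $K$; then (c) forces $\overline{K}^{w}\subseteq K$, and $K=\overline{K}^{w}$ is weakly compact.

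It then remains to read off the complexity from (a)--(c). Condition (a) is Borel: ``$K$ is norm-bounded'' is equivalent to $\Empt(K)\vee(\exists r\in\om)(\forall n)\,\|d_n(K)\|\le r$, and ``$\|d_n(K)\|\le r$'' is Borel in $K$ because $d_n$ is Borel-measurable (and $(d_n(K))_n$ is dense in $K$ when $K\neq\emptyset$). Condition (b), as a subset of $F(\ca{X})$, reads $(\forall(x_n)_n\in\ca{X}^{\om})(\forall(\phi_m)_m\in(B^*)^{\om})\bigl[(\forall n)\,\Mem(x_n,K)\longrightarrow\psi\bigr]$, where $\psi$ says that if both iterated limits of $\bigl(\phi_m(x_n)\bigr)_{n,m}$ exist then they are equal; since $\Mem$ is Borel, the array is a continuous function of $\bigl(K,(x_n)_n,(\phi_m)_m\bigr)$, and ``the limit exists and equals $t$'' is arithmetical over a Borel array, the bracketed matrix is Borel in $\bigl(K,(x_n)_n,(\phi_m)_m\bigr)$; as $\ca{X}^{\om}\times(B^*)^{\om}$ is Polish, condition (b) defines a \boldpii\ subset of $F(\ca{X})$. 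Condition (c) is handled in the same way, quantifying $(\forall(x_n)_n\in\ca{X}^{\om})(\forall x\in\ca{X})$ over a Borel matrix built from $\Mem$ and the conditions $f_m(x_n)\to f_m(x)$; it is \boldpii\ as well. A finite intersection of \boldpii\ (and Borel) sets is \boldpii, so $\ca{R}$ is \boldpii. I expect the real obstacle to lie in the equivalence rather than in this bookkeeping: the delicate point is that a norm-closed set need not be weakly closed, which is precisely why clause (c) is needed and why one must use separability to know that on the weakly compact set $\overline{K}^{w}$ the weak topology is metrizable, so that ``weakly closed'' can be rephrased sequentially; one also has to keep the functional sequences in (b) inside $B^*$ (weak$^*$-metrizable, hence a Polish parameter space) rather than inside the possibly non-separable $\ca{X}^*$.
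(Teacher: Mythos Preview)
Your argument is correct and genuinely different from both proofs in the paper. The paper's primary proof starts from the Eberlein--\v{S}mulian characterisation, which on the face of it only gives a $\tboldsymbol{\Pi}^1_3$ definition, and then uses effective descriptive set theory (Debs' effective Bourgain--Fremlin--Talagrand theorem together with Kleene's theorem on restricted quantification) to push the two inner existential quantifiers down to $\del$ witnesses, thereby collapsing the complexity to $\pii$. The paper's second, ``Banach-space-theoretic'' proof encodes, for each $F$, a tree of finite $M$-Schauder sequences from $F$ that $\ep$-dominate the summing basis and shows that $F$ is weakly compact iff all these trees are well-founded, reducing the question to $\WF$.

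Your route via Grothendieck's double-limit criterion avoids both the effective machinery and the tree encoding: you exhibit directly a $\boldpii$ formula (Borel matrix under universal quantifiers over the Polish spaces $\ca{X}^\om$ and $(B_{\ca{X}^*})^\om$). The equivalence you prove is sound; the key point you correctly isolate is that on the relatively-weakly-compact set $\overline{K}^w$ the weak topology is metrised by any total sequence $(f_m)$ in $B_{\ca{X}^*}$ (compact-to-Hausdorff bijection), so clause~(c) captures weak closedness sequentially. Compared with the paper, your proof is shorter and more self-contained, needing only classical Banach-space facts (Grothendieck, Eberlein--\v{S}mulian) available in \ZFDC\ for separable spaces; what the paper's effective proof buys in return is a \emph{lightface} $\pii(\ep)$ bound with an explicit parameter, which is what feeds into the discussion around Question~\ref{question weakly compact contains hyp member} and the hoped-for $\del$-uniform selection of fixed points. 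The tree proof, in turn, situates the result within the Argyros--Dodos framework used for $\REFL$ and related $\boldpii$-complete classes.
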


A key tool is the following result of Kleene (\cf \cite{kleene_quantification_numbertheoretic_functions} and for a more modern version 4D.3 in \cite{yiannis_dst}).

\begin{theorem}[The Theorem on Restricted Quantification]
\label{theorem on restricted quantification}

Let $\mathcal{X}$ and $\mathcal{Y}$ be recursively representable metric spaces and let
$Q \subseteq \mathcal{X} \times \mathcal{Y}$ be in $\pii(\ep)$ for
some $\ep \in \mathcal{N}$. Then the set $P \subseteq \ca{X}$ which is defined by
\[
P(x) \eq (\exists y \in \del(\ep,y))Q(x,y).
\]
is also in $\pii(\ep)$.
\end{theorem}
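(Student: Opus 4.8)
The plan is to reduce the statement to the standard effective enumeration of the \del \ points of a recursively presented space and then to exploit two closure properties of \pii: closure under the number quantifier and closure under universal quantification over \ca{Y}, together with the fact that $\Sigma^1_1$ is the dual of \pii. I read the displayed condition as $P(x) \eq (\exists y \in \del(\ep,x))Q(x,y)$, i.e.\ the existential quantifier ranges over those $y \in \ca{Y}$ that are \del \ in the parameters $(\ep,x)$; this is the only sensible reading and the printed ``$\del(\ep,y)$'' is a misprint.

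The one nontrivial ingredient I would invoke is the coding of the \del \ points, which belongs to the machinery developed before this statement in the monograph \cite{yiannis_dst}. Concretely, for the recursively presented \ca{Y} there is a \pii \ set $\mathsf{D} \subseteq \ca{N} \times \om$ (the \emph{valid codes}) and a $\Sigma^1_1$ set $\mathsf{E} \subseteq \ca{N} \times \om \times \ca{Y}$ (the \emph{decoding}) such that, for every $z \in \ca{N}$, one has $y \in \del(z) \eq (\exists n)[\mathsf{D}(z,n) \ \& \ \mathsf{E}(z,n,y)]$, and moreover whenever $\mathsf{D}(z,n)$ holds the section $\set{y \in \ca{Y}}{\mathsf{E}(z,n,y)}$ is a singleton. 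Such a pair is obtained by coding each \del(z) point as a recursive-in-$z$ functional applied to a stage $H^z_a$ of the hyperarithmetical hierarchy along a $z$-recursive ordinal notation $a$: validity of the notation is the \pii \ predicate, while the graph $y = \{e\}^{H^z_a}$ is $\Sigma^1_1$ on the valid region and single-valued there.

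With $z := \langle \ep, x \rangle$ and the given $Q$ in $\pii(\ep)$, I would rewrite $P$ by expanding the restricted quantifier via the enumeration and pulling the number quantifier to the front, obtaining
\[
P(x) \eq (\exists n)\big[\mathsf{D}(\langle \ep,x \rangle,n) \ \& \ (\exists y)(\mathsf{E}(\langle \ep,x \rangle,n,y) \ \& \ Q(x,y))\big].
\]
A naive count of this formula is useless, since $(\exists y)(\Sigma^1_1 \ \& \ \pii)$ is merely $\Sigma^1_2$. The decisive move is to invoke single-valuedness: for a valid code $n$ the clause $(\exists y)(\mathsf{E} \ \& \ Q)$ is equivalent to $(\forall y)(\mathsf{E} \to Q)$, because the section of $\mathsf{E}$ is a singleton. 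Hence
\[
P(x) \eq (\exists n)\big[\mathsf{D}(\langle \ep,x \rangle,n) \ \& \ (\forall y)(\neg\mathsf{E}(\langle \ep,x \rangle,n,y) \ \vee \ Q(x,y))\big].
\]

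Reading off the complexity now finishes the argument: since $\mathsf{E}$ is $\Sigma^1_1(\ep)$ its negation is $\pii(\ep)$, so together with $Q \in \pii(\ep)$ the disjunction $\neg\mathsf{E} \ \vee \ Q$ is $\pii(\ep)$; as \pii \ is closed under the universal quantifier $\forall y$ over \ca{Y}, the formula after $\forall y$ is $\pii(\ep)$, conjoining with $\mathsf{D} \in \pii(\ep)$ keeps us in $\pii(\ep)$, and finally closure of \pii \ under the number quantifier $\exists n$ gives $P \in \pii(\ep)$, as required. The whole difficulty is thus concentrated in the coding lemma of the second paragraph: what makes the theorem true — and what fails for an \emph{unrestricted} existential real quantifier — is that the \del \ points admit a \emph{dual} description, a \pii \ validity predicate together with a $\Sigma^1_1$ single-valued decoding. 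It is precisely the $\Sigma^1_1$ (rather than \pii) form of the decoding that lets the offending $\exists y$ be traded for a harmless $\forall y$; establishing this single-valued, dually described enumeration is the technical heart, and everything after it is bookkeeping with the closure properties of \pii.
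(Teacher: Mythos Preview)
The paper does not actually prove this theorem: it is stated as a cited result attributed to Kleene (with a pointer to 4D.3 in \cite{yiannis_dst}) and is used only as a black box in the proof of Lemma~\ref{lemma weakly compact is in pii}. There is therefore no in-paper argument to compare your proposal against.

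That said, your sketch is the standard argument one finds in \cite{yiannis_dst}: enumerate the $\del(\ep,x)$ points of \ca{Y} by a \pii\ set of valid codes together with a single-valued $\Sigma^1_1$ decoding, and then use single-valuedness on the valid region to replace the offending $\exists y$ by a $\forall y$, after which closure of \pii\ under $\forall^{\ca{Y}}$, $\&$, $\vee$ and $\exists^\om$ finishes the computation. Your reading of the displayed condition as $(\exists y \in \del(\ep,x))Q(x,y)$, with ``$\del(\ep,y)$'' a misprint, is also correct.
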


We say that a sequence functions $(f_n)_{\n}$ from a recursively presented metric space \ca{X} to \R \ is \emph{$\Gamma$-recursive} (where $\Gamma$ is a pointclass)  if the relation $P \subseteq \ca{X} \times \om \times \om$ defined by
\[
P(x,n,s) \eq f_n(x) \in N(\R,s)
\]
is in $\Gamma$, where $(N(\R,s))_{s \in \om}$ is a fixed recursive enumeration of all intervals with rational endpoints.

Another important tool is the following result of Debs \cf \cite{debs_effective_properties_in_compact_sets}, which in turn is the effective version of a theorem of Bourgain, Fremlin and Talagrand, \cf \cite{bourgain_fremlin_talagrand_pointwise_compact_sets}.

\begin{theorem}[Debs]
\label{theorem debs}
Suppose that \ca{Y} is a recursively presented Polish space and that $f_n: \ca{Y} \to \R$, \n, are such that the sequence $(f_n)_{\n}$ is   pointwise-bounded and in $\del(\alpha)$ for some parameter $\alpha \in \ca{N}$. If every cluster point of $(f_n)_{\n}$ in $\R^\ca{X}$ \tu{(}with the product topology\tu{)} is a Borel measurable function then there exists some infinite $L \subseteq \om$ in $\del(\alpha)$ such that the subsequence $(f_n)_{n \in L}$ is pointwise convergent.
\end{theorem}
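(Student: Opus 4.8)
The plan is to recast Debs' theorem as an effective form of Rosenthal's $\ell^1$-dichotomy and then to upgrade the classical Bourgain--Fremlin--Talagrand argument so that the convergent subsequence is produced by a $\del(\alpha)$-recursive construction. Relativising everything to $\alpha$ (so that I may suppress the parameter and treat $(f_n)_{\n}$ as $\del$-recursive and pointwise bounded), I would first pass to the oscillation data of the sequence. For rationals $p<q$ set
\[
A_n^{p,q} = \set{y \in \ca{Y}}{f_n(y) < p}, \qquad B_n^{p,q} = \set{y \in \ca{Y}}{f_n(y) > q};
\]
since $(f_n)_{\n}$ is $\del$-recursive these relations are $\del$ uniformly in $n,p,q$. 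The basic reformulation is that for an infinite $L \subseteq \om$ the subsequence $(f_n)_{n \in L}$ converges pointwise if and only if for every pair $p<q$ there is no $y$ lying in $A_n^{p,q}$ for infinitely many $n \in L$ and in $B_n^{p,q}$ for infinitely many $n \in L$; this is the condition I shall have to arrange on $L$.

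The non-effective heart of the matter is the classical dichotomy: on any infinite index set one can refine to a subsequence that is either pointwise convergent or ``$\ell^1$-independent'' for some fixed pair $p<q$, meaning that $\bigcap_{n \in I} A_n^{p,q} \cap \bigcap_{n \in J} B_n^{p,q} \neq \emptyset$ for all finite disjoint $I, J \subseteq \om$. Here is where the hypothesis enters: an independent subsequence forces the pointwise closure of $\{f_n\}$ in $\R^\ca{Y}$ to contain a homeomorphic copy of $\beta\om$; as this has $2^{\mathfrak c}$ points while there are only $\mathfrak c$ many Borel functions, some cluster point of $(f_n)_{\n}$ would be non-Borel, contrary to assumption. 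Thus the independent alternative is excluded and the classical theorem already yields a convergent subsequence, giving the statement without its complexity conclusion.

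The real work --- and the place I expect the main obstacle --- is producing the set $L$ inside $\del(\alpha)$. It is not enough to observe that $G = \set{L}{(f_n)_{n \in L} \ \textup{converges pointwise}}$ is a non-empty $\pii(\alpha)$ (indeed $\Sigma^1_1(\alpha)$) subset of $\ca{N}$, because a non-empty set of this complexity need not contain a $\del(\alpha)$ member; the subsequence must instead be constructed. My plan is to set up a derivation --- a Cantor--Bendixson-style rank --- on the tree of finite increasing index sequences that measures the obstruction to convergence recorded above. The exclusion of the independent alternative makes this tree well-founded, and the standard boundedness theorem for $\pii$ sets bounds its rank by some ordinal $\xi < \om_1^{\alpha}$, \ie by a $\del(\alpha)$ ordinal. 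One then builds $L$ by effective transfinite recursion along $\xi$, at each step choosing an extension of the current finite stock of indices that strictly lowers the rank. The point of keeping the rank $\del(\alpha)$-recursive is that each such step is governed by a $\pii(\alpha)$ condition of the form ``there is a $\del(\alpha)$ index set realising the next requirement'', which by the Theorem on Restricted Quantification (Theorem \ref{theorem on restricted quantification}) remains $\pii(\alpha)$; carrying this through the recursion keeps the defining relation of $L$ within $\del(\alpha)$, so that $L \in \del(\alpha)$ as required. The genuinely hard part is therefore not the existence of a convergent subsequence but the uniform, effective bookkeeping: showing that the Bourgain--Fremlin--Talagrand derivation can be arranged with $\del(\alpha)$-recursive rank and that the stepwise selections stay inside $\del(\alpha)$. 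This effective refinement of the classical theorem is the essential content of Debs' result.
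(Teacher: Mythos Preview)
The paper does not contain a proof of this statement: Theorem~\ref{theorem debs} is quoted as a result of Debs with a reference to \cite{debs_effective_properties_in_compact_sets}, and is then used as a black box inside the proof of Lemma~\ref{lemma weakly compact is in pii}. So there is nothing in the paper against which to compare your proposal.

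That said, your outline is broadly faithful to the strategy of Debs' original argument: reduce pointwise convergence to a family of oscillation conditions indexed by rational pairs $p<q$, invoke the Bourgain--Fremlin--Talagrand/Rosenthal dichotomy to rule out the independent alternative under the hypothesis that all cluster points are Borel, and then effectivise the selection of the convergent subsequence via a well-founded rank bounded below $\omega_1^{\alpha}$ together with the Theorem on Restricted Quantification. The one place your sketch is somewhat hand-wavy is the final step: the phrase ``build $L$ by effective transfinite recursion along $\xi$, at each step choosing an extension \dots\ that strictly lowers the rank'' hides the actual content of Debs' argument. One has to be careful that (i) the rank function on the relevant tree of finite index sequences is itself a $\pii(\alpha)$-norm, so that comparison of ranks is $\del(\alpha)$, and (ii) the diagonalisation over all rational pairs $p<q$ is arranged so that a single infinite $L$ simultaneously kills every oscillation obstruction, not just one pair at a time. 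These are exactly the technical points Debs' paper works out; your proposal correctly identifies them as the crux but does not resolve them. As a summary of the architecture of the proof your write-up is accurate, but it is a plan rather than a proof.
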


Finally we need the following result of \cite{gregoriades_dichotomy_pointwise_summability} \cf Theorem 2.3.
\begin{theorem}

\label{theorem the week limit is in del}

Suppose that \ca{X} is a separable Banach space and that $\ca{X}$ and $\ca{X}^\om$ are recursively presented. If $(x_n)_{\n} \in \ca{X}^\om$ weakly converges to $x \in \ca{X}$ then $x$ is a $\del((x_n))$ points.
\end{theorem}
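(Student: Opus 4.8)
The plan is to produce a defining condition for the singleton $\{x\}$ that is arithmetical in the sequence $(x_n)_{\n}$ alone, and then to transfer this low complexity from the set $\{x\}$ to the point $x$ by the standard theory of $\del$ points. The guiding idea is that although the weak limit is a priori defined by a quantification over the whole dual $\ca{X}^*$, it can in fact be recovered using nothing but the (recursive) norm of $\ca{X}$ and finite convex combinations of the $x_n$.

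Concretely, I would first prove the purely Banach-space identity
\[
\{x\} = \bigcap_{m} \overline{\mathrm{conv}}\set{x_i}{i \geq m}.
\]
The inclusion $\subseteq$ is Mazur's Lemma applied to each tail: since $(x_i)_{i\geq m}$ still converges weakly to $x$, the point $x$ lies in the weak closure, hence in the norm closure, of $\mathrm{conv}\set{x_i}{i \geq m}$. For the inclusion $\supseteq$, if $y$ belongs to every $\overline{\mathrm{conv}}\set{x_i}{i \geq m}$, then for each $\phi \in \ca{X}^*$ the number $\phi(y)$ lies in the closed interval with endpoints $\inf_{i \geq m}\phi(x_i)$ and $\sup_{i\geq m}\phi(x_i)$, for every $m$; as $\phi(x_i) \to \phi(x)$ both endpoints converge to $\phi(x)$, so $\phi(y)=\phi(x)$ for all $\phi$ and therefore $y=x$ by Hahn-Banach separation. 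I stress that Hahn-Banach enters only in this background verification that the right-hand side is a singleton; it does not occur in the defining formula and so does not affect the complexity count.

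Next I would verify that the right-hand set is arithmetical in the parameter $(x_n)_{\n} \in \ca{X}^\om$. Because $\ca{X}$ and $\ca{X}^\om$ are recursively presented, the norm $\norm{\cdot}$, the linear operations, and the coordinate maps $(x_n)_n \mapsto x_i$ are all recursive, so that
\[
y \in \overline{\mathrm{conv}}\set{x_i}{i \geq m} \eq (\forall j)(\exists \lambda)\bigl[\,\norm{y - \sum\nolimits_{i} \lambda_i x_i} < 2^{-j}\,\bigr],
\]
where $\lambda$ ranges over codes of finite tuples of nonnegative rationals supported on indices $\geq m$ with $\sum_i \lambda_i = 1$. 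Here the witness $\lambda$ is a single number quantifier and the inner inequality is recursively open, so the displayed relation is $\Pi^0_2((x_n))$ in the variables $(y,m)$; intersecting over $m$ contributes one further universal number quantifier, whence the set in question is $\Pi^0_2((x_n))$, in particular $\del((x_n))$.

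It remains to pass from the singleton to its element. Since $A := \{x\}$ is a $\del((x_n))$ set, its neighbourhood relation is $\del((x_n))$: for every basic neighbourhood one has $x \in N(\ca{X},s) \eq (\exists y)[\,y \in A \ \& \ y \in N(\ca{X},s)\,]$, which is $\Sigma^1_1((x_n))$, and equally $x \in N(\ca{X},s) \eq (\forall y)[\,y \in A \longrightarrow y \in N(\ca{X},s)\,]$, which is $\pii((x_n))$. Thus the neighbourhood filter of $x$ is $\del((x_n))$ and $x$ is a $\del((x_n))$ point, as the standard characterization of $\del$ points as elements of $\pii$ singletons requires (\cf \cite{yiannis_dst}). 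I expect the only delicate part to be the first step: finding a description of the weak limit that is at once a singleton and effective. The Mazur characterization supplies exactly such a description from the recursive norm, after which both the complexity estimate and the singleton-to-point passage are routine.
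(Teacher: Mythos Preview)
The paper does not actually prove this theorem; it is quoted verbatim as Theorem~2.3 of \cite{gregoriades_dichotomy_pointwise_summability} and invoked as a black box inside the proof of Lemma~\ref{lemma weakly compact is in pii}. So there is no in-paper argument to compare against, and your proposal supplies content that the present article outsources.

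Your argument is correct. The identity $\{x\}=\bigcap_m\overline{\mathrm{conv}}\set{x_i}{i\geq m}$ is exactly right: the inclusion $\subseteq$ is Mazur applied to each tail, and for $\supseteq$ your squeeze of $\phi(y)$ between $\inf_{i\geq m}\phi(x_i)$ and $\sup_{i\geq m}\phi(x_i)$ is valid once one notes that a weakly convergent sequence is norm-bounded, so those infima and suprema are finite. The crucial point---and you isolate it correctly---is that this description of $\{x\}$ uses only the norm, the linear operations and rational convex coefficients, with no quantifier over $\ca{X}^*$; this is what pushes the complexity down to $\Pi^0_2((x_n))$ rather than the naive $\Pi^1_1((x_n))$ one would get from the raw definition of weak limit. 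The passage from an arithmetical singleton to a $\del$ point via the two-sided neighbourhood computation is the standard argument (the special case of 4D.2/4D.4 in \cite{yiannis_dst}).

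Two minor stylistic remarks. First, in your last sentence the phrase ``standard characterization of $\del$ points as elements of $\pii$ singletons'' reads as if every $\del$ point arises this way by some nontrivial theorem; you only need the easy direction, that the unique member of a $\pii(\alpha)$ singleton is $\del(\alpha)$, and you have in any case already proved it directly. Second, you could note in passing that the invocation of Hahn--Banach is unproblematic in the ambient \ZFDC\ framework since \ca{X} is separable (the paper makes exactly this point in the Introduction).
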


\begin{proof}[\textit{Proof of Lemma \ref{lemma weakly compact is in pii}}.]
Let us consider the unit ball $B_{\ca{X}^*}$ of the first dual of $\ca{X}$ with the weak$^*$ topology, that is the least topology on $B_{\ca{X}^*}$ under which every function of the form $x^* \in B_{\ca{X}^*} \mapsto x^*(x)$, where $x \in \ca{X}$, is continuous. Since $\ca{X}$ is separable it can be proved in the context of \ZFDC \  that the space $(B_{\ca{X}^*},{\rm weak}^*)$ is compact Polish (Banach-Alaoglu Theorem in separable spaces). From now and on we will always consider $B_{\ca{X}^*}$ with the weak$^*$ topology.

Let us see first how far we can go using only classical (i.e., non-effective) means. From the Eberlein-\v{S}mulian Theorem -which is a theorem of \ZFDC- we have that
\begin{eqnarray}
\label{equivalence weakly compact}
&& \ \ K \ \textrm{is weakly compact}\\
\nonumber &\eq& \ (\forall (x_n)_{\n} \subseteq K)(\exists L \in [\om]^\om)(\exists x \in K)(\forall x^* \in B_{\ca{X^*}})[\lim_{n \in L}x^*(x_n)=x^*(x)]
\end{eqnarray}
for all $K \subseteq \ca{X}$. (The quantification of the sequence $(x_n)_{\n}$ is done over the Polish space $\ca{X}^\om$.) The latter equivalence only shows that the set
\[
\ca{R} = \set{K \in F(\ca{X})}{K \ \textrm{is weakly compact}}
\]
is a $\tboldsymbol{\Pi}^1_3$ subset of $F(\ca{X})$.\footnote{\label{foonote mention of Moschovakis Uniformization}Here it is worth pointing out Moschovakis' Uniformization Theorem (\cf \cite{yiannis_uniformization_in_a_playfull_universe} or 6C.6 in \cite{yiannis_dst}) which implies that $\ZFDC + \Det{\tboldsymbol{\Delta}^1_2}$ proves that the pointclass $\tboldsymbol{\Pi}^1_3$ has the uniformization property, where  $\Det{\tboldsymbol{\Delta}^1_2}$ is the statement that ``every $\tboldsymbol{\Delta}^1_2$ Gale-Steward game on \om \ is determined". This shows that our strategy does not contradict \ZFDC \ at least. In Footnote \ref{footnote explanation for Pi13} we explain this claim in more detail.}

Now we use methods from effective descriptive set theory to prove that the latter set is in fact \boldpii.

We fix a recursive enumeration $(r_s)_{s \in \om}$ of the rational numbers and a norm dense sequence $(d_i)_{\iin}$ in \ca{X}. Also we consider the fixed recursive enumeration  $(N(\R,s))_{s \in \om}$ of the basis of \R.

Define the function
\[
f: \om \times B_{\ca{X}^*} \to \R: f(i,x^*) = x^*(d_i).
\]
Clearly the function $f$ is continuous and so the set $V \subseteq \om^2 \times B_{\ca{X^*}}$ defined by $$V(n,s,x^*) \eq x^*(d_i) \in N(\R,s)$$ is open. It follows that $V$ is in $\Sigma^0_1(\ep_0)$ for some parameter $\ep_0$. The latter means that the function $f$ is $\ep_0$-recursive. We also consider a parameter $\ep_1$ such that all of the following spaces $\ca{X}$, $\ca{X}^\om$, $B_{\ca{X}^*}$ and $F(\ca{X})$ admit an $\ep_1$-recursive presentation. Moreover we may assume that we have taken the sequence $(d_i)_{\iin}$ as the $\ep_1$-recursive presentation for \ca{X}. As we mentioned before the set $\Mem \subseteq \ca{X} \times F(\ca{X})$ defined by $\Mem(x,F) \eq x \in F$, is Borel and using again the method of relativization we may choose some $\ep_2 \in \ca{N}$ such that $\Mem$ is in $\del(\ep_1,\ep_2)$. Now take $\ep = \langle \ep_0,\ep_1,\ep_2 \rangle$ so that the previous assertions remain valid if we replace $\ep_0$, $\ep_1$ and $\ep_2$ by $\ep$. Finally notice that the ``projection" function $\pr: \om \times \ca{X}^\om \to \ca{X}: \pr(i,(x_n)) = x_i$, is $\ep$-recursive.

For every $x \in \ca{X}$ we consider the function
\[
\tau_x: B_{\ca{X}^*} \to \R: \tau_x(x^*)=x^*(x).
\]
As pointed out before we view every sequence $(x_n)_{\n}$ in \ca{X} as a point of $\ca{X}^\om$. The claim is that for every sequence $(x_n)_{\n}$ the sequence of functions $(\tau_{x_n})_{\n}$ is in $\del(\ep,(x_n))$ in the sense of the theorem of Debs. To see this we verify first the following equivalence
\[
\tau_{x_n}(x^*) < r_s \eq (\exists k \in \om)(\exists i \in \om)[\norm{d_i-x_n} < \frac{1}{k+1} \ \& \ x^*(d_i) < r_s - \frac{2}{k+1}]
\]
for all \n, $x^* \in B_{\ca{X}^*}$ and $s \in \om$, where \norm{\cdot} is the norm of \ca{X}.

For the left-to-right-hand direction choose a $k$ and $i$ such that $\ds \frac{3}{k+1} < r_s - x^*(x_n)$ and $\norm{d_i-x_n} < \ds \frac{1}{k+1}$. Then it is clear that $x^*(d_i) < \ds \frac{1}{k+1} + x^*(x_n) < r_s - \frac{2}{k+1}$. The inverse direction is straightforward.

Since the functions $f = ((i,x^*) \mapsto x^*(d_i))$ and $\pr = ((n,(x_k)) \mapsto x_n)$ are $\ep$-recursive it follows that the condition on the right side of the previous equivalence defines a $\Sigma^0_1(\ep,(x_n))$ subset of $\om^2 \times B_{\ca{X^*}}$. One proves a similar equivalence for the condition $\tau_{x_n}(x^*) > r_s$ and so the the sequence $(\tau_{x_n})_{\n}$ is in fact $\Sigma^0_1(\ep,(x_n))$-recursive.

Now we go back to the equivalence (\ref{equivalence weakly compact}) and we show that the $L$ and $x$ which appear there can be chosen to be in $\del(\ep,(x_n))$ i.e., we claim that
\begin{eqnarray}
\label{equivalence effective weakly compact}
&& \ \ K \ \textrm{is weakly compact}\\
\nonumber &\eq& \ (\forall (x_n)_{\n} \subseteq K)(\exists L \in \del(\ep,(x_n)))(\exists x \in K \cap \del(\ep,(x_n)))\\
\nonumber  &&  \  (\forall x^* \in B_{\ca{X^*}})[\lim_{n \in L}x^*(x_n)=x^*(x)]
\end{eqnarray}
for all $K \subseteq \ca{X}$. The right-to-left-hand implication is clear from (\ref{equivalence weakly compact}) so let us prove the left-to-right-hand implication. Suppose that $(x_n)_{\n}$ is a sequence in $K$. Since the latter set is weakly compact, it is in particular bounded and so the sequence of functions $(\tau_{x_n})_{\n}$ is pointwise bounded. Moreover from the preceding claim the sequence $(\tau_{x_n})_{\n}$ is in $\del(\ep,(x_n))$. Finally since $K$ is weakly compact every cluster point of $(\tau_{x_n})_{\n}$ in $\R^{B_{\ca{X}^*}}$ is a function of the form $\tau_x$ for some $x \in K$. Hence every cluster point of $(\tau_{x_n})_{\n}$ is in fact a continuous function. Therefore all conditions of Debs' Theorem are satisfied and so there exists an infinite $L \subseteq \om$ in $\del(\ep,(x_n))$ such that the subsequence $(\tau_{x_n})_{n \in L}$ is pointwise convergent. Using again the weak compactness of $K$ it follows that the sequence $(x_n)_{n \in L}$ is weakly convergent to some $x \in K$. From Theorem \ref{theorem the week limit is in del} we have that $x$ is in $\del(\ep,(x_n)_{n \in L})$. It is clear that the sequence $(x_n)_{n \in L}$ is recursive in the pair $(L,(x_n)_{n \in \om})$. Since $L$ is in $\del(\ep,(x_n))$ we have that $x$ is in $\del(\ep,(x_n))$ as well. This completes the proof of the equivalence (\ref{equivalence effective weakly compact}). (The application of Debs' Theorem found here is similar to the argument for proving Corollary 1.10 in \cite{gregoriades_dichotomy_pointwise_summability}, p. 161-162.)

Finally we verify that the condition of the right side of (\ref{equivalence effective weakly compact}) defines a \boldpii \ set. The relation $R \subseteq \ca{X}^\om \times \ca{X} \times B_{\ca{X}^*} \times \om$ defined by
\[
R((y_n),x,x^*,s) \eq |x^*(y_n) - x^*(x)| < r_s
\]
is in $\del(\ep)$, c.f. \cite{gregoriades_dichotomy_pointwise_summability}. From this it follows that the relation $Q \subseteq 2^\om \times \ca{X}^\om \times \ca{X} \times B_{\ca{X}^*}$ defined by
\[
Q(L,(x_n),x,x^*) \eq L \ \textrm{is infinite} \ \& \ \lim_{n \in L}x^*(x_n)=x^*(x)
\]
is also in $\del(\ep)$. Moreover since the relation $\Mem$ is in $\del(\ep)$ it follows that $T \subseteq \om \times \ca{X}^\om \times F(\ca{X})$ defined by
\[
T(i,(x_n),K) \eq x_i \in K
\]
is in $\del(\ep)$ as well. Using the Theorem on Restricted Quantification and the preceding comments we have that the right side of the equivalence (\ref{equivalence effective weakly compact}) defines a set in $\pii(\ep)$ and thus a \boldpii \ set.
\end{proof}

\begin{remark}
\label{remark bossard}\normalfont
It is well-known that the separable Banach space $C(2^\om)$ of the continuous real functions on $[0,1]$ with the maximum norm $\norm{\cdot}_{\infty}$ is universal for the class of separable Banach spaces, \ie every separable Banach space is isometric to a closed subset of $(C(2^\om),\norm{\cdot}_{\infty})$. Therefore we may view the class of all separable Banach spaces as a subset of $F(C(2^\om))$. 

We fix for the rest of this article the set $\REFL \subseteq F(C(2^\om))$ defined by
\[
\REFL(X) \iff X \ \textrm{is reflexive}.
\]
 Bossard \cf \cite{bossard_codages_banach} has proved that the set \REFL \ is \emph{Borel \boldpii-complete}, \ie it is \boldpii \ and that every \boldpii \ subset of a Polish space is reducible to \REFL \ via a Borel function.

Our remark is that one can prove that \REFL \ is a \boldpii \ set using Lemma \ref{lemma weakly compact is in pii}. Recall that a Banach space is reflexive exactly when its closed unit ball is weakly compact, so from Lemma \ref{lemma weakly compact is in pii} it is enough to prove that the mapping
\[
\Ball: F(C(2^\om)) \to F(C(2^\om)): X \mapsto \set{f \in X}{\norm{f}_{\infty} \leq 1}
\]
is Borel-measurable. To prove the latter we consider the sequence of Borel-measurable functions $$d_n: F(C(2^\om)) \to C(2^\om)$$ of Theorem \ref{theorem selection in F(X)} and a non-empty open $U \subseteq C(2^\om)$. Notice that for all $X \in F(C(2^\om))$, $f \in X$ with $\norm{f}_{\infty} > 0$ and $\ep > 0$ there exists some \n \ such that
\[
\norm{f}_{\infty} - \ep < \norm{d_n(X)}_{\infty} < \norm{f}_{\infty}.
\]
Using this remark we have that
\begin{align*}
\Ball(X) \in \ca{A}_U \iff& \Ball(X) \cap \ca{A}_U \neq \emptyset\\
                      \iff& \set{f \in X}{\norm{f}_{\infty} \leq 1} \cap U \neq \emptyset\\
                      \iff& (\exists k, n)[\norm{d_n(X)}_{\infty} < 1 \ \& \ \overline{B}_{\infty}(d_n(X),(k+1)^{-1}) \subseteq U]\\
                      \iff& (\exists k, n)(\forall m)\big \{\norm{d_n(X)}_{\infty} < 1\\
                          & \hspace*{3mm}\& \ [\norm{d_m(X)-d_m(X)}_{\infty} < (k+1)^{-1} \longrightarrow \Mem(d_m(X),U)]\big \}.
\end{align*}
Therefore the function $\Ball$ is Borel-measurable.
\end{remark}

Now we get back to our proof. We say that a family \ca{R} of subsets of some set $X$ is \emph{closed under intersections of $\subseteq$-chains} if for all $(A_i)_{i \in I}$ in \ca{R}, for which $A_i \subseteq A_j$ or $A_j \subseteq A_i$ for all $i, j \in I$, the intersection $\cap_{i \in I} A_i$ is a member of \ca{R}.

Some of the arguments that we are going to use in the proof of Theorem \ref{theorem minimal weakly compact convex} yield the following result which is worth pointing out.

\begin{lemma}
\label{lemma general minimal with respect to a good property}
Suppose that \ca{X} is a Polish space, $F$ is a non-empty closed subset of \ca{X} and that \pntcl \ is a good pointclass which has the semi-uniformization property and contains \boldpii. Moreover suppose that $\mathcal{R} \subseteq F(\ca{X})$ satisfies the following:

\tu{(1)} $F \in \ca{R}$,

\tu{(2)} \ca{R} is in \pntcl,

\tu{(3)} \ca{R} is closed under intersections of $\subseteq$-chains.

\noindent
Then for every Borel measurable function  $T: F \to F$ there is a minimal $L \subseteq F$ which is $T$-invariant and belongs to \ca{R}, i.e., there exists some $L \subseteq F$ in $\ca{R}$ such that $T[L] \subseteq L$ and for which there is no $L' \subseteq F$ which belongs to \ca{R} with $T[L'] \subseteq L'$ and $L' \subsetneq L$.
\end{lemma}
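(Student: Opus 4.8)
The plan is to present the statement as an instance of Lemma \ref{lemma inductive space maximal point}, carried out inside the Polish space $F(\ca{X})$ (taken with the topology whose Borel sets are the Effros-Borel sets). I would put
\[
\mathbb{P} = \set{K \in F(\ca{X})}{K \subseteq F, \ T[K] \subseteq K \ \text{and} \ K \in \ca{R}}
\]
and order $\mathbb{P}$ by reverse inclusion $K \leq L \iff L \subseteq K$, as in Lemma \ref{lemma less-or-equal is in pii}. A maximal point of $(\mathbb{P},\leq)$ is then precisely a $\subseteq$-minimal member of $\mathbb{P}$, that is, a minimal $T$-invariant $L \subseteq F$ belonging to $\ca{R}$; no generality is lost by restricting attention to closed competitors, since membership in $\ca{R} \subseteq F(\ca{X})$ already forces closedness, so that an arbitrary $L' \subseteq F$ with $L' \in \ca{R}$ and $T[L'] \subseteq L'$ is automatically a member of $\mathbb{P}$. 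Thus everything reduces to checking the four hypotheses of Lemma \ref{lemma inductive space maximal point}: \tu{(i)} $\mathbb{P} \neq \emptyset$; \tu{(ii)} $\mathbb{P} \in \pntcl$; \tu{(iii)} the strict part of $\leq$ is computed by $\pntcl$ on $\mathbb{P} \times \mathbb{P}$; \tu{(iv)} $(\mathbb{P},\leq)$ is inductive.

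Item \tu{(i)} is immediate, since $F \in \mathbb{P}$: indeed $F \in \ca{R}$ by hypothesis \tu{(1)}, $T[F] \subseteq F$ because $T$ maps $F$ into $F$, and $F \subseteq F$. For \tu{(ii)} I would express $\mathbb{P}$ as a conjunction of three conditions on $K$. The condition ``$K \subseteq F$'' is $\boldpii$, being the preimage of the $\boldpii$ relation of Lemma \ref{lemma less-or-equal is in pii} under the continuous map $K \mapsto (F,K)$; the condition ``$K \in \ca{R}$'' is in $\pntcl$ by hypothesis \tu{(2)}; and since $\pntcl$ contains $\boldpii$ and is good, hence closed under $\&$, it will then only remain to show that ``$T[K] \subseteq K$'' defines a $\boldpii$ (a fortiori $\pntcl$) subset of $F(\ca{X})$. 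This last fact simultaneously yields \tu{(iii)}: by Lemma \ref{lemma less-or-equal is in pii} the strict part of reverse inclusion is already $\boldpii$, hence in $\pntcl$, on all of $F(\ca{X}) \times F(\ca{X})$, and a fortiori it is computed by $\pntcl$ on $\mathbb{P} \times \mathbb{P}$.

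The step I expect to be the real obstacle is showing that $\set{K \in F(\ca{X})}{T[K] \subseteq K}$ is $\boldpii$, the difficulty being that $T$ is merely Borel-measurable: one cannot detect $T$-invariance by evaluating $T$ on the countable sequence $(d_n(K))_{\n}$ of Theorem \ref{theorem selection in F(X)} (that would require $T$ continuous), and $\set{x \in F}{T(x) \in K}$ need not be closed. The way I would get around this is to extend $T$ to a Borel-measurable $\widetilde T : \ca{X} \to \ca{X}$ (for instance equal to some fixed point of $F$ on the Borel set $\ca{X} \setminus F$) and observe that, for $K \subseteq F$,
\[
T[K] \subseteq K \iff (\forall x \in \ca{X})\big[\, \neg\Mem(x,K) \ \vee \ \Mem(\widetilde T(x),K) \,\big].
\]
The matrix on the right is a Boolean combination of the Borel relation $\Mem$ with its preimage under the Borel-measurable map $(x,K) \mapsto (\widetilde T(x),K)$, hence Borel; universally quantifying over the Polish space $\ca{X}$ then produces a $\boldpii$ set, which finishes \tu{(ii)} and \tu{(iii)}.

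For \tu{(iv)} I would first note that $F$ is the least element of $\mathbb{P}$ (it contains every member of $\mathbb{P}$), which takes care of the empty chain. Given a non-empty $\leq$-linearly ordered $\ca{C} \subseteq \mathbb{P}$ — equivalently a $\subseteq$-chain in $\mathbb{P}$ — the candidate least upper bound is $\bigcap \ca{C}$: it is closed as an intersection of closed sets, it is contained in $F$, and it is $T$-invariant since $x \in \bigcap\ca{C}$ forces $x \in K$, hence $T(x) \in K$, for every $K \in \ca{C}$; and it lies in $\ca{R}$ by hypothesis \tu{(3)}. So $\bigcap\ca{C} \in \mathbb{P}$, it is an upper bound of $\ca{C}$ in $(\mathbb{P},\leq)$ because $\bigcap\ca{C} \subseteq K$ for all $K \in \ca{C}$, and it is the least one because any upper bound $M$ satisfies $M \subseteq K$ for all $K \in \ca{C}$, whence $M \subseteq \bigcap\ca{C}$, i.e.\ $\bigcap\ca{C} \leq M$. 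Therefore $(\mathbb{P},\leq)$ is inductive, Lemma \ref{lemma inductive space maximal point} applies, and the maximal point it provides is the minimal $L$ we want.
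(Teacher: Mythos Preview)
Your proof is correct and follows essentially the same approach as the paper: define the same $\mathbb{P}$, verify it is non-empty and in $\pntcl$, invoke Lemma~\ref{lemma less-or-equal is in pii} for the strict order, check inductiveness via intersections of chains, and apply Lemma~\ref{lemma inductive space maximal point}. You actually supply more detail than the paper does---in particular your extension trick $\widetilde T$ and the $\forall x$ formula make explicit why $\set{K}{T[K]\subseteq K}$ is $\boldpii$, a point the paper simply asserts---and your treatment of the empty chain is a nicety the paper omits.
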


(Notice that we do not exclude the possibility $L = \emptyset$. Of course the interesting applications are when membership in \ca{R} excludes this possibility, as it is in the case of Theorem \ref{theorem minimal weakly compact convex}.)

\begin{proof}
Define the set $\mathbb{P} \subseteq F(\ca{X})$ by
\[
K \in \mathbb{P} \eq K \subseteq F \ \& \ K \in \ca{R} \ \& \ T[K] \subseteq K.
\]
Notice that $\mathbb{P}$ is not empty, because $F$ is a member of \ca{R} and $T$ takes values inside $F$. Using the fact that $T$ is Borel-measurable we have that the set $\ca{R}_1 \subseteq F(\ca{X})$ defined by
\[
K \in \ca{R}_1 \eq T[K] \subseteq K,
\]
is in $\boldpii \subseteq \pntcl$. Since \ca{R} is also in \pntcl \ and \pntcl \ is good, it follows that $\mathbb{P}$ is in \pntcl \ as well.

We consider the partial order $\leq$ on $F(\ca{X})$ as defined in Lemma \ref{lemma less-or-equal is in pii}. As we have shown in the preceding lemma the strict relation $<$ is in \boldpii \ and so it is in \pntcl. In particular $<$ is computed by \pntcl \ on $\mathbb{P} \times \mathbb{P}$.

We now show that the space $(\mathbb{P},\leq)$ is inductive. Indeed suppose that $(K_i)_{i \in I}$ is a $\leq$-chain in $\mathbb{P}$. Consider the set $K : = \cap_{i \in I} K_i$. From our hypothesis about \ca{R} the set $K$ is a member of \ca{R}. Moreover $K \subseteq F$ and $T[K] \subseteq \cap_{i \in I} T[K_i] \subseteq \cap_{i \in I} K_i = K$. Thus $K \in \mathbb{P}$. From the definition of $K$ it is clear that $K = \sup_{\leq}\set{K_i}{i \in I}$.

Now we apply Lemma \ref{lemma inductive space maximal point} to get a $\leq$-maximal point $L \in \mathbb{P}$. It is clear that this $L$ satisfies the conclusion.
\end{proof}

\begin{proof}[\textit{Proof of Theorem \ref{theorem minimal weakly compact convex}}.]
We will apply Lemma \ref{lemma general minimal with respect to a good property} with $\pntcl = \tboldsymbol{\Pi}^1_1$, which has the uniformization property, and \ca{R} being defined by
\[
K \in \ca{R} \eq K \ \textrm{is non-empty, convex and weakly compact}.
\]
Recall that every weakly closed set is also closed in the norm topology, so $\ca{R} \subseteq F(\ca{X})$. We show that conditions (1)-(3) in the statement of Lemma \ref{lemma general minimal with respect to a good property} are satisfied. It is clear that $F \in \ca{R}$ and that \ca{R} is closed under intersections of $\subseteq$-chains, so conditions (1) and (3) are satisfied. We now show that \ca{R} is in $\tboldsymbol{\Pi}^1_1$ to meet condition (2). Consider the sets $\ca{R}_i \subseteq F(\ca{X})$, $i=1,2,3$ defined by
\begin{eqnarray*}
K \in \ca{R}_1 &\eq& K \neq \emptyset\\
K \in \ca{R}_2 &\eq& K \ \textrm{is convex}\\
K \in \ca{R}_3 &\eq& K \ \textrm{is weakly compact}
\end{eqnarray*}
for $K \in F(\ca{X})$. It is enough to show that the sets $\ca{R}_i$, $i=1,2,3$, are in \boldpii. We remark that
\[
K \in \ca{R}_1 \eq K \in A_\ca{X},
\]
so $\ca{R}_1$ is in fact a Borel subset of $F(\ca{X})$. Regarding $\ca{R}_2$, using the fact that we are dealing with closed sets, it follows that
\begin{align*}
K \in \ca{R}_2 \iff& (\forall x,y \in \ca{X})(\forall t \in [0,1])[x,y \in K \longrightarrow tx+(1-t)y \in K]\\
                       \iff&(\forall n,m)(\forall q \in \mathbb{Q} \cap [0,1])[qd_n(K) + (1-q)d_m(K) \in K]
\end{align*}
where the $d_n$ are the functions from the Kuratowski, Ryll-Nardzewski Theorem. Hence $\ca{R}_2$ is also a Borel set. From Lemma \ref{lemma weakly compact is in pii} we have that $\ca{R}_3$ is a \boldpii \ set as well. So condition (2) is satisfied.
Now we apply Lemma \ref{lemma general minimal with respect to a good property} and we get a minimal $L \subseteq F$ which is $T$-invariant and belongs to \ca{R}.\footnote{\label{footnote explanation for Pi13}Assuming $\Det{\tboldsymbol{\Delta}^1_2}$ and using Moschovakis' Uniformization Theorem we may choose the pointclass \pntcl \ in Lemma \ref{lemma general minimal with respect to a good property} to be  $\tboldsymbol{\Pi}^1_3$. As remarked in the proof of Lemma \ref{lemma weakly compact is in pii} it is a straightforward consequence of the Eberlein-\v{S}mulian Theorem that the set $\ca{R}_3$ is $\tboldsymbol{\Pi}^1_3$ and therefore the set $\ca{R}$ is $\tboldsymbol{\Pi}^1_3$ as well. So from Lemma \ref{lemma general minimal with respect to a good property} we get our result. Therefore $\ZFDC + \Det{\tboldsymbol{\Delta}^1_2}$ proves Theorem \ref{theorem minimal weakly compact convex} in a relatively easy way, as we do not need to get into the pains of proving that \ca{R} is in fact $\tboldsymbol{\Pi}^1_1$. This remark precedes chronologically  Lemma \ref{lemma weakly compact is in pii} and it served as an indication of the correctness of our strategy.}
\end{proof}

\begin{remark}\normalfont
\label{remark schoenfield absoluteness}A well-known application of \emph{Shoenfield's Absoluteness Theorem} (\cf \cite{jech_set_theory} Theorem 25.20) deals with removing the Axiom of Choice from proofs of statements of certain complexity. It is natural to ask if Shoenfield's Absoluteness can be applied in our case. After some ambivalence from the part of the author, H. Mildenberger has pointed out that this is indeed correct. It is a well-known corollary of the latter that if G\"odel' s constructible universe ${\rm L}$ proves a $\Sigma^1_3$ statement, then this statement holds in any model of \ZFDC. The similar assertion is true with respect to some parameter $\alpha$. Using Lemma \ref{lemma weakly compact is in pii}, one can see that the statement ``there exists a minimal set which satisfies the conclusion in Theorem  \ref{theorem minimal weakly compact convex}'' is a $\tboldsymbol{\Sigma}^1_3$ statement and hence it is $\Sigma^1_3(\alpha)$ for some $\alpha \in \ca{N}$. Since ${\rm L}[\alpha]$ is a model of $\ZFC$, it proves in particular $\varphi$, (because the latter is a theorem of \ZFC). It follows from Shoenfield's Absoluteness that this statement is a theorem of \ZFDC. This remark shows that one can trade Kondo's Uniformization with Shoenfield's Absoluteness in order to derive a minimal set $L$ as above without the Axiom of Choice (in separable Banach spaces).
\end{remark}

In the last pages of this article we present the Banach space theoretic approach for proving Theorem \ref{theorem minimal weakly compact convex}. This approach was suggested by the referee.

By \emph{tree} on a set $X$ we mean a non-empty set $T$ of finite sequences of points in $X$ (including the empty sequence) which is closed downwards under initial segments. Given a tree $T$ we denote with $T_{u}$ the set all finite sequences in $T$ which are compatible with the finite sequence $u$. It is easy to see that $T_{u}$ is a tree as well. By enumerating the set of all finite sequences of naturals and by identifying a tree on \om \ with its characteristic function, we can view a tree as a member of $2^\om$. We denote by \Tr \ the space of all trees on \om. It is easy to see that \Tr \ is a closed subset of $2^\om$ and thus it is a Polish space. A tree is \emph{well-founded} if it has no infinite branches. We denote by \WF \ the set of all well-founded trees on \om. The latter set is the standard example of a $\boldpii$-complete set.

Our aim is to show that the set $\ca{R}$ (in the notation of the Proof of Theorem \ref{theorem minimal weakly compact convex}) Borel-reduces to the set \WF, \ie that there exists a Borel-measurable function
\[
\Phi: F(\ca{X}) \to \Tr
\]
such that $\ca{R} = \Phi^{-1}[\WF]$. Since \WF \ is a \boldpii \ set and $\Phi$ is Borel-measurable, it follows that \ca{R} is \boldpii \ as well.

We proceed with some Banach space theoretic terminology. Suppose that $0 < \ep \leq 1 \leq M$ are given and that $(x_1,\dots,x_m)$ is a finite sequence in the Banach space \ca{X}.

We say that $(x_1,\dots,x_m)$ is \emph{$M$-Schauder}
if for all $k \in \{1,\dots,m\}$ and all real numbers $a_1,\dots,a_m$ we have
\[
\norm{\sum_{n=1}^k a_nx_n} \leq M \cdot \norm{\sum_{n=1}^m a_nx_n}.
\]
The sequence $(x_1,\dots,x_m)$ \emph{\ep-dominates the summing basis} if for all positive reals $a_1, \dots, a_m$ with $\sum_{n=1}^m a_n=1$ we have that
\[
\norm{\sum_{n=1}^ma_nx_n} \geq \ep.
\]
Observe that the members of a finite sequence, which is $M$-Schauder (resp. $\ep$-dominates the summing basis), is bounded from above (resp. from below) by $M$ (resp. by $\ep$) in norm. 

We say that an infinite sequence $(x_n)_{\n}$ is \emph{basic} if $x_m \neq 0$ for all $m$ and there exists some $M > 0$ such that $(x_1,\dots,x_m)$ is $M$-Schauder for all $m$. The preceding condition is known to be equivalent to the statement that $(x_n)_{\n}$ is a Schauder basis of the space $\overline{\rm span}\set{x_n}{\n}$, (\ie the closure in \ca{X} of the space generated by $\set{x_n}{\n}$). Notice that every subsequence of a basic sequence is basic as well.

We follow the technique of \cite{argyros_dodos_genericity_amalgamation_Banach_spaces} pp. 679-680 (see also \cite{dodos_Banach_spaces_and_dst_lecure_notes} Section 2.2). We fix a separable Banach space \ca{X} and the functions $d_n : F(\ca{X}) \to \ca{X}$ of the Kuratowski, Ryll-Nardzewski Theorem.

For all $0 < \ep \leq 1 \leq M$ and all $F \in F(\ca{X})$ we define $T(F,\ep,M)$ as follows
\begin{align*}
(u_1,\dots,u_m) \in T(F,\ep,M) \iff& (d_{u_1}(F),\dots,d_{u_m}(F)) \ \textrm{is $M$-Schauder}\\
                                                      & \hspace*{15mm} \textrm{and $\ep$-dominates the summing basis}
\end{align*}
for all natural numbers $u_1,\dots, u_m$. (By $m=0$ in the definition above we mean the empty sequence.) It is easy to verify that $T(F,\ep,M)$ is a tree on \om.

Let us denote by $\BC(\ca{X})$ the set of all non-empty closed bounded convex subsets of \ca{X} and by $\WCC(\ca{X})$ the set of all $F \in \BC(\ca{X})$ such that $F$ is weakly compact. Clearly $\WCC(\ca{X}) = \ca{R}$ in the notation of the proof of Theorem \ref{theorem minimal weakly compact convex}. Notice that $\BC(\ca{X})=\ca{R}_1 \cap \ca{R}_2 \cap \set{F \in F(\ca{X})}{\textrm{$F$ is bounded}}$ is a Borel subset of $F(\ca{X})$. We now claim the following variation of Lemma 5 in \cite{argyros_dodos_genericity_amalgamation_Banach_spaces}.

\begin{lemma}
\label{lemma variation argyros-dodos}
For all $F \in \BC(\ca{X})$ we have that
\[
F \in \WCC(\ca{X}) \iff ({\rm for \ all} \ 0 < \ep \leq 1 \leq M)[T(F,\ep,M) \in \WF].
\]
\end{lemma}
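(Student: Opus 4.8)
The plan is to derive Lemma~\ref{lemma variation argyros-dodos} from the Banach-space characterisation underlying Lemma~5 of \cite{argyros_dodos_genericity_amalgamation_Banach_spaces}: for a bounded, closed, convex set $C$ in a Banach space, $C$ is weakly compact if and only if $C$ contains no infinite \emph{basic} sequence which $\ep$-dominates the summing basis for some $\ep>0$. The translation into trees rests on two observations. First, $T(F,\ep,M)$ has an infinite branch precisely when there is an infinite sequence $(x_n)_{\n}$ in $F$, with $x_n=d_{u_n}(F)$, all of whose finite initial segments are $M$-Schauder and $\ep$-dominate the summing basis; such a sequence is basic with basis constant $M$, and by a routine limiting argument with strictly positive convex coefficients every finite subcollection of it still $\ep$-dominates the summing basis (the same remark is what makes $T(F,\ep,M)$ closed under initial segments). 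Second, by Theorem~\ref{theorem selection in F(X)} the sequence $(d_n(F))_{\n}$ is dense in $F$, so a basic sequence in $F$ may be replaced by a nearby one of the form $(d_{u_n}(F))_{\n}$. The Eberlein--\v{S}mulian theorem (a theorem of \ZFDC) is the bridge that turns weak compactness of $F$ into a statement about subsequences.

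For the implication from left to right I would argue contrapositively. Assume $T(F,\ep,M)$ has an infinite branch $(u_n)_{\n}$ and put $x_n=d_{u_n}(F)\in F$; then $(x_n)_{\n}$ is basic with basis constant $M$, so that every finite convex combination of its members has norm at least $\ep$. If $F$ were weakly compact, Eberlein--\v{S}mulian would give a subsequence $x_{n_k}\rightharpoonup x$. The closed span $Y=\overline{\rm span}\set{x_n}{\n}$ is norm-closed and convex, hence weakly closed, so $x\in Y$; applying the coefficient functionals of the basis $(x_n)_{\n}$, extended to $\ca{X}$ by Hahn--Banach, we get $x_m^*(x)=\lim_k x_m^*(x_{n_k})=0$ for every $m$, whence $x=0$. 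Then $x_{n_k}\rightharpoonup 0$, so by Mazur's theorem some convex combination of the $x_{n_k}$ has norm strictly less than $\ep$, a contradiction. Hence $F\notin\WCC(\ca{X})$.

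The substance is the converse, again by contraposition: given $F\in\BC(\ca{X})\setminus\WCC(\ca{X})$ I must produce $0<\ep\le 1\le M$ with $T(F,\ep,M)\notin\WF$. Since $F$ is weakly closed but not weakly compact, Eberlein--\v{S}mulian yields a sequence $(y_n)_{\n}$ in $F$ with no weakly convergent subsequence. By Rosenthal's $\ell^1$ theorem we pass to a subsequence that is either $C$-equivalent to the unit vector basis of $\ell^1$ or weakly Cauchy. In the first case the subsequence is automatically basic and every convex combination of its terms has norm at least $1/C$, so it $\tfrac1C$-dominates the summing basis. In the second case the subsequence is weakly Cauchy but, by the choice of $(y_n)_{\n}$, not weakly convergent; following the methods of \cite{argyros_dodos_genericity_amalgamation_Banach_spaces} (Bessaga--Pe\l czy\'nski selection, using that such a sequence is not weakly null) one extracts a basic subsequence, and then, after fixing $f\in\ca{X}^*$ with $\norm f\le 1$ and $\liminf_n|f(y_n)|>0$ and passing once more to a subsequence on which $f(y_n)$ has constant sign and stays bounded away from $0$ in modulus, this $f$ witnesses that every convex combination of the surviving terms has norm bounded below by a fixed $\ep>0$. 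Either way we obtain $0<\ep\le 1$, $M\ge 1$ and a basic sequence $(y_n)_{\n}$ in $F$, with basis constant $M$, that $\ep$-dominates the summing basis. Finally, using the density of $(d_n(F))_{\n}$ and the small-perturbation lemma for basic sequences, choose $u_n\in\nat$ with $\norm{d_{u_n}(F)-y_n}$ small enough (and rapidly summable) that $(d_{u_n}(F))_{\n}$ is $2M$-Schauder on every finite initial segment and still $\tfrac\ep2$-dominates the summing basis; then $(u_n)_{\n}$ is an infinite branch of $T(F,\tfrac\ep2,2M)$, which is therefore not well-founded.

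The main obstacle is the weakly Cauchy case of the converse, namely extracting from a weakly Cauchy, non-weakly-convergent sequence in $F$ a \emph{basic} subsequence that also $\ep$-dominates the summing basis: the domination is cheap once a separating functional $f$ is fixed, but the basic-subsequence selection is the delicate point and is exactly where one must invoke the Bessaga--Pe\l czy\'nski machinery, as in \cite{argyros_dodos_genericity_amalgamation_Banach_spaces}. A secondary, purely bookkeeping, issue is to check that the final perturbation into $(d_n(F))_{\n}$ degrades the Schauder constant and the domination constant by controllable amounts, so that the perturbed sequence really sits on a branch of some $T(F,\ep',M')$. All the analytic inputs --- the Eberlein--\v{S}mulian theorem, Rosenthal's $\ell^1$ theorem, the Bessaga--Pe\l czy\'nski selection principle, Mazur's theorem and Hahn--Banach for the separable space $\ca{X}$ --- are available in \ZFDC, so the whole argument stays within our ambient theory.
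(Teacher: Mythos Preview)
Your argument is correct and close in spirit to the paper's, though you deploy the two main analytic tools in swapped directions. For the forward implication (argued contrapositively) the paper first invokes Rosenthal's $\ell^1$ dichotomy on the branch sequence and runs a three-case analysis ($\ell^1$, weak$^*$ limit outside $\ca{X}$, weak$^*$ limit inside $\ca{X}$), only the last of which leads to the Mazur contradiction; you instead assume weak compactness outright, get a weakly convergent subsequence by Eberlein--\v{S}mulian, and go straight to the coordinate-functional/Mazur argument. Your route is shorter and avoids the case split. For the converse, the paper quotes the Pe\l czy\'nski form of the Eberlein--\v{S}mulian theorem as a single black box---a non-weakly-compact closed bounded convex set contains a basic sequence $(z_n)$ together with a functional $x^*$ with $\lim x^*(z_n)>0$---and then perturbs into $(d_n(F))_{\n}$ via the Bessaga--Pe\l czy\'nski stability result; you instead reconstruct that black box by hand through Rosenthal plus basic-subsequence extraction plus a separating functional, followed by the same perturbation. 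One terminological quibble: the basic-subsequence extraction in your weakly Cauchy case is not the classical Bessaga--Pe\l czy\'nski selection principle (which is stated for weakly null sequences) but rather the fact that a bounded sequence with no weakly convergent subsequence admits a basic subsequence---this is exactly Pe\l czy\'nski's argument, so in effect you are re-deriving the theorem the paper cites.
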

For reasons of completeness we will give a proof of the preceding lemma, but before we do so let us remark how one can derive Theorem \ref{theorem minimal weakly compact convex} from this result. As before we apply  Lemma \ref{lemma general minimal with respect to a good property} with $\pntcl = \tboldsymbol{\Pi}^1_1$ and the problem is reduced to proving that the set $\WCC(\ca{X})$ is \boldpii.

We define $$\Phi: \BC(\ca{X}) \to \Tr$$ by
\[
u \in \Phi(F) \iff \textrm{if} \ u=(n,u_1,\dots,u_{m-1}) \ \textrm{then} \ (u_1,\dots,u_{m-1}) \in T(F,1/(n+1),n+1).
\]
It is clear that $\Phi(F)$ is a tree on $\om$ and that $\Phi(F)_{(n)} = T(F,1/(n+1),n+1)$ for all \n. By unraveling the definitions one can see that $\Phi$ is Borel-measurable. (Clearly it is enough to quantify the $a_1,\dots,a_n$'s over the rationals.)

It is also clear that if $$0 \leq \ep' \leq \ep \leq 1 \leq M \leq M'$$ and $(x_1,\dots,x_m)$ is $M$-Schauder (resp. $\ep$-dominates the summing basis) then \mbox{} $(x_1,\dots,x_m)$ is $M'$-Schauder (resp. $\ep'$-dominates the summing basis) as well, and so $T(F,\ep,M) \subseteq T(F,\ep',M')$.

Using the preceding remarks and Lemma \ref{lemma variation argyros-dodos} we have that
\begin{align*}
F \in \WCC(\ca{X}) \iff& ({\rm for \ all} \ 0 < \ep \leq 1 \leq M)[T(F,\ep,M) \in \WF]\\
                              \iff& (\forall n)[T(F,1/(n+1),n+1) \in \WF]\\
                              \iff& (\forall n)[\Phi(F)_{(n)} \in \WF]\\
                              \iff& \Phi(F) \in \WF
\end{align*}
for all $F \in \BC(\ca{X})$. Since the latter set is a Borel subset of $F(\ca{X})$, we can extend $\Phi$ to a Borel-measurable function $\tilde{\Phi}$ on $F(\ca{X})$ in such a way that $\tilde{\Phi}(F) \not \in \WF$ for all $F \not \in \BC(\ca{X})$. Hence
\[
F \in \WCC(\ca{X}) \iff \tilde{\Phi}(F) \in \WF
\]
for all $F \in F(\ca{X})$ and so $\WCC(\ca{X}) = \tilde{\Phi}^{-1}[\WF]$ is a $\boldpii$ set.

It remains to prove Lemma \ref{lemma variation argyros-dodos}. Let $F$ be a member of $\BC(\ca{X})$. Suppose that $0 < \ep \leq 1 \leq M$ are given and that the tree $T = T(F,\ep,M)$ is not well-founded. We consider an infinite branch $\alpha: \{1,2,\dots \} \to \om$ of $T$ and let $x_n = d_{\alpha(n)}(F)$ for all $n \geq 1$. By Rosenthal's $\ell_1$ Dichotomy Theorem (\cf \cite{kechris_classical_dst} Theorem 19.20) there exists some $\{1 \leq l_0 < l_1 < \dots \} \subseteq \om$ such that the subsequence $(x_{l_n})_{\n}$ is either equivalent to the standard unit vector basis of $\ell_1$ or it is weak$^\ast$ convergent to some $x^{\ast \ast}$ in the second dual $\ca{X}^{\ast \ast}$. In the first case we immediately get that $(x_{l_n})_{\n}$ does not have a weakly convergent subsequence in $\ca{X}$ -and hence in $F$. Therefore $F$ is not weakly compact. In the second case we consider whether $x^{\ast \ast}$ belongs to $\ca{X}$ or not. If $x^{\ast \ast}$ is not a member of $\ca{X}$ then again $(x_{l_n})_{\n}$ does not have a weakly convergent subsequence in $\ca{X}$ and as before $F$ is not weakly compact. Now we show that the subcase $x^{\ast \ast} \in \ca{X}$ cannot occur. This will complete the proof of the direct implication of Lemma \ref{lemma variation argyros-dodos}. Suppose towards contradiction that $x^{\ast \ast} = x$ is a member of $\ca{X}$. We take \ca{Y} to be the closure of the space generated by $\set{x_{l_n}}{\n}$. Then $x$ is a member of \ca{Y} and the sequence $(x_{l_n})_{\n}$ is a Schauder basis of $\ca{Y}$. By taking the diagonal functionals we obtain that $x = 0$, \ie the sequence $(x_{l_n})_{\n}$ is weakly null. From Mazur's Theorem there exists a finite convex combination $z$ of $\set{x_{l_n}}{\n}$ such that $\norm{z} < \ep$, contradicting that $(x_1,\dots,x_N)$ $\ep$-dominates the summing basis for all $N \geq 1$.

Conversely assume that $F$ is not weakly compact. We will show that for some $\ep$ and $M$ the tree $T(F,\ep,M)$ is not well-founded. Notice that the weak closure of $F$ coincides with its norm closure -which is $F$- since $F$ is a convex set (Mazur's Theorem). We apply the Pe{\l}czy{\'n}ski form of the Eberlein-\v{S}mulian Theorem to $F$, \cf \cite{pelczynski_proof_Eberlein_Smulian_application_basic_sequences} or \cite{diestel_sequences_series_Banach_spaces} p. 41. The latter implies that $F$ contains a basic sequence $(z_n)_{\n}$ of elements of $F$ and that there exists some $x^\ast$ in the closed unit ball of $\ca{X}^\ast$ such that $\lim_{n} x^\ast(z_n) \geq 2\ep > 0$ for some $\ep < 1$. By removing an initial segment of the sequence $(z_n)_{\n}$ we may assume that $x^\ast(z_n) \geq 2\ep$ for all \n. The next remark is that we can approximate every $z_n$ by some $d_{u_n}(F)$ such that the sequence $(d_{u_n}(F))_{\n}$ is basic (in fact equivalent to $(z_n)_{\n}$) and $x^\ast(d_{u_n}(F)) \geq \ep$ for all \n, see Theorem 1 in \cite{bessaga_pelczynski_generalization_results_James}. Put $y_n = d_{u_n}(F)$ for all \n. For all positive real numbers $a_1, \dots, a_m$ with $\sum_{n=1}^m a_n = 1$ we have that
\[
\norm{\sum_{n=1}^m a_n y_n} \geq x^\ast(\sum_{n=1}^m a_n y_n) = \sum_{n=1}^m a_n x^\ast(y_n) \geq \ep \sum_{n=1}^m a_n = \ep.
\]
Hence the finite sequence $(y_1,\dots,y_m)$ $\ep$-dominates the summing basis for all $m$. Since $(y_n)_{\n}$ is basic we can choose some $M \geq 1$ such that $(y_1,\dots,y_m)$ is $M$-Schauder for all $m$. This implies that $(u_1,u_2,\dots,u_n,\dots)$ is an infinite branch of $T(F,\ep,M)$. The proof is complete.


\begin{thebibliography}{10}

\bibitem{argyros_dodos_genericity_amalgamation_Banach_spaces}
Spiros~A. Argyros and Pandelis Dodos.
\newblock Genericity and amalgamation of classes of {B}anach spaces.
\newblock {\em Adv. Math.}, 209(2):666--748, 2007.

\bibitem{bessaga_pelczynski_generalization_results_James}
C.~Bessaga and A.~Pe{\l}czy{\'n}ski.
\newblock A generalization of results of {R}. {C}. {J}ames concerning absolute
  bases in {B}anach spaces.
\newblock {\em Studia Math.}, 17:165--174, 1958.

\bibitem{bossard_codages_banach}
B.~Bossard.
\newblock Codages des espaces de {B}anach s\'{e}parables. {F}amilles
  analytiques ou coanalytiques d'espaces de {B}anach.
\newblock {\em Comptes Rendus de l'Acad\'{e}mie des Sciences - Series I -
  Mathematics}, 316(10):1005--1010, 1993.

\bibitem{bourgain_fremlin_talagrand_pointwise_compact_sets}
J.~Bourgain, D.H. Fremlin, and M.~Talagrand.
\newblock Pointwise compact sets of {B}aire-measurable functions.
\newblock {\em Amer. J. Math.}, 100(4):845--886, 1978.

\bibitem{debs_effective_properties_in_compact_sets}
G.~Debs.
\newblock Effective properties in compact sets of {B}orel functions.
\newblock {\em Mathematica}, 34(1):64--68, 1987.

\bibitem{diestel_sequences_series_Banach_spaces}
Joseph Diestel.
\newblock {\em Sequences and series in {B}anach spaces}, volume~92 of {\em
  Graduate Texts in Mathematics}.
\newblock Springer-Verlag, New York, 1984.

\bibitem{dodos_Banach_spaces_and_dst_lecure_notes}
Pandelis Dodos.
\newblock {\em Banach spaces and descriptive set theory: selected topics},
  volume 1993 of {\em Lecture Notes in Mathematics}.
\newblock Springer-Verlag, Berlin, 2010.

\bibitem{fuchssteiner_iterations_and_fixpoints}
Benno Fuchssteiner.
\newblock Iterations and fixpoints.
\newblock {\em Pacific J. Math.}, 68(1):73--80, 1977.

\bibitem{goebel_kirk_some_problems_in_metric_fixed_point_theory}
Kazimierz Goebel and W.~A. Kirk.
\newblock Some problems in metric fixed point theory.
\newblock {\em J. Fixed Point Theory Appl.}, 4(1):13--25, 2008.

\bibitem{goehde_zum_prinzip_der_kontraktiven_abbildung}
Dietrich G{\"o}hde.
\newblock Zum {P}rinzip der kontraktiven {A}bbildung.
\newblock {\em Math. Nachr.}, 30:251--258, 1965.

\bibitem{gregoriades_dichotomy_pointwise_summability}
V.~Gregoriades.
\newblock A dichotomy result for a pointwise summable sequence of operators.
\newblock {\em Ann. Pure Appl. Logic}, 160(2):154--162, 2009.

\bibitem{jech_set_theory}
Thomas Jech.
\newblock {\em Set theory}.
\newblock Springer Monographs in Mathematics. Springer-Verlag, Berlin, 2003.
\newblock The third millennium edition, revised and expanded.

\bibitem{kechris_classical_dst}
Alexander~S. Kechris.
\newblock {\em Classical Descriptive Set Theory}, volume 156 of {\em Graduate
  Texts in Mathematics}.
\newblock Springer-Verlag, 1995.

\bibitem{kirk_an_abstract_fixed_point_theorem_for_nonexpansive_mappings}
W.~A. Kirk.
\newblock An abstract fixed point theorem for nonexpansive mappings.
\newblock {\em Proc. Amer. Math. Soc.}, 82(4):640--642, 1981.

\bibitem{kleene_quantification_numbertheoretic_functions}
S.C. Kleene.
\newblock Quantification of number-theoretic functions.
\newblock {\em Compositio Math.}, 14:23--40, 1959.

\bibitem{kondo_uniformisation}
M.~Kond\^{o}.
\newblock L'uniformisation des compl�mentaires analytiques. (french).
\newblock {\em Proc. Imp. Acad.}, 13(8):287--291, 1937.

\bibitem{kuratowski_ryll-nardzewski}
K.~Kuratowski and C.~Ryll-Nardzewski.
\newblock A general theorem on selectors.
\newblock {\em Bull. Acad. Polon. Sci. S\'{e}r. Sci. Math. Astronom. Phys.},
  13:397--403, 1965.

\bibitem{yiannis_uniformization_in_a_playfull_universe}
Y.N. Moschovakis.
\newblock Uniformization in a playful universe.
\newblock {\em Bull. Amer. Math. Soc.}, 77:731--736, 1971.

\bibitem{yiannis_dst}
Y.N. Moschovakis.
\newblock {\em Descriptive set theory, Second edition}, volume 155 of {\em
  Mathematical Surveys and Monographs.}
\newblock American Mathematical Society, 2009.

\bibitem{pelczynski_proof_Eberlein_Smulian_application_basic_sequences}
A.~Pe{\l}czy{\'n}ski.
\newblock A proof of {E}berlein-\v {S}mulian theorem by an application of basic
  sequences.
\newblock {\em Bull. Acad. Polon. Sci. S\'er. Sci. Math. Astronom. Phys.},
  12:543--548, 1964.

\bibitem{simpson_subsystems_of_second_order_arithmetic}
Stephen~G. Simpson.
\newblock {\em Subsystems of second order arithmetic, Second edition}.
\newblock Perspectives in Logic. Cambridge University Press, 2009.

\bibitem{von_neumann_on_rings_of_operators_reduction_theory}
John von Neumann.
\newblock On rings of operators. {R}eduction theory.
\newblock {\em Ann. of Math. (2)}, 50:40--485, 1949.

\end{thebibliography}
\end{document}